\tikzset{circle split part fill/.style  args={#1,#2}{%
 alias=tmp@name, 
  postaction={%
    insert path={
     \pgfextra{%
     \pgfpointdiff{\pgfpointanchor{\pgf@node@name}{center}}%
                  {\pgfpointanchor{\pgf@node@name}{east}}%
     \pgfmathsetmacro\insiderad{\pgf@x}
      \fill[#1] (\pgf@node@name.base) ([xshift=-\pgflinewidth]\pgf@node@name.east) arc
                          (0:180:\insiderad-\pgflinewidth)--cycle;
      \fill[#2] (\pgf@node@name.base) ([xshift=\pgflinewidth]\pgf@node@name.west)  arc
                           (180:360:\insiderad-\pgflinewidth)--cycle;            
         }}}}}  
\newtheorem{theorem}{Theorem}
\theoremstyle{definition}
\newtheorem{remark}{Remark}
\newtheorem{example}{Example}
\newcommand{\Stir}[2]{\genfrac{ \{ }{ \} }{0pt}{}{#1}{#2}}
\newcommand{\Euler}[2]{\genfrac{ \langle }{ \rangle }{0pt}{}{#1}{#2}}
\newcommand\xqed[1]{%
  \leavevmode\unskip\penalty9999 \hbox{}\nobreak\hfill
  \quad\hbox{#1}}
\newcommand\demo{\xqed{$\dashv$}}
\title{Runs in labelled trees and mappings}
\author[M.-L.~Lackner]{Marie-Louise Lackner}
\address{Marie-Louise Lackner\\
Forschungsbereich Databases and Artificial Intelligence\\
Technische Universit\"at Wien\\
Favoritenstra{\ss}e 9-11, 1040 Wien, Austria} %
\email{marie-louise.lackner@tuwien.ac.at}
\author[A.~Panholzer]{Alois Panholzer}
\address{Alois Panholzer\\
Institut f{\"u}r Diskrete Mathematik und Geometrie\\
Technische Universit\"at Wien\\
Wiedner Hauptstra{\ss}e 8-10/104\\
1040 Wien, Austria} \email{Alois.Panholzer@tuwien.ac.at}
\date{\today}
\keywords{Labelled trees, Random mappings, Runs, Functional equations, Exact enumeration, Limiting distributions, Bijections}%
\thanks{Both authors were supported by the Austrian Science Fund FWF, grant P25337-N23.}
\begin{document}

\begin{abstract} We generalize the concept of ascending and descending runs from permutations to rooted labelled trees and mappings, i.e., functions from the set $\{1, \dots, n\}$ into itself.
A combinatorial decomposition of the corresponding functional digraph together with a generating functions approach allows us to perform a joint study of ascending and descending runs in labelled trees and mappings, respectively. From the given characterization of the respective generating functions we can deduce bivariate central limit theorems for these quantities.
Furthermore, for ascending runs (or descending runs) we gain explicit enumeration formul{\ae} showing a connection to Stirling numbers of the second kind. We also give a bijective proof establishing this relation, and further state a bijection between mappings and labelled trees connecting the quantities in both structures. 
\end{abstract}

\maketitle

\section{Introduction}\label{sec:Introduction}
An $n$-mapping is a function $f : [n] \to [n]$ from the set of integers $[n] \colonequals\left\lbrace 1, 2, \ldots, n\right\rbrace$ into itself. Random $n$-mappings, i.e., where one of these $n^{n}$ functions is chosen with equal probability, appear in various applications, e.g., in cryptography and for occupancy problems.
For an $n$-mapping $f$ the functional digraph (also called mapping graph) $G_{f} = (V,E)$ is the directed graph with vertex-set $V=[n]$ and edge-set $E=\{(i,f(i)) : i \in [n]\}$. Structural properties of the functional digraphs of random mappings have widely been studied, see, e.g., the work of Arney and Bender~\cite{ArnBen1982}, Kolchin~\cite{Kol1986}, Flajolet and Odlyzko~\cite{FlaOdl1990}.
For instance, it is well known that the expected number of connected components in a random $n$-mapping is asymptotically $1/2 \cdot  \log(n)$, the expected number of cyclic nodes is $\sqrt{\pi n/2}$, and the expected number of terminal nodes, i.e., nodes with no preimages, is $e^{-1}n$.

In the functional digraph $G_{f}$ corresponding to a random mapping $f$ the nodes' labels play an important r\^{o}le and thus it is somewhat surprising that so far there are only few studies concerning occurrences of label patterns.
One such label pattern are ascending edges\footnote{Actually, they are also called ``ascents'' in the literature but due to a use notion of this term by Gessel in~\cite{Ges1996} to which we will refer later in this work, we use ``ascending edges'' instead.}, i.e., edges $e=(x,y)$ in $G_{f}$ with $x<y$ (note that throughout this work we will always identify a node with its label). By providing a family of weight preserving bijections, E\u{g}ecio\u{g}lu and Remmel~\cite{EgeRem1986} showed how results on ascending edges in mappings (which are amenable rather easily) can be translated to corresponding ones in Cayley trees, i.e., rooted labelled trees. Using their results, Clark~\cite{Cla2008} provided central limit theorems for the number of ascending edges in random mappings and in random rooted labelled trees.
Another research direction concerned with patterns formed by the labels in a functional digraph can be found in \cite{Pan2013} where alternating mappings have been studied; these are a generalization of the concept of alternating permutations to mappings.
They can be defined as mappings for which every iteration orbit $i, f(i), f^{2}(i), \dots$ forms an alternating sequence, or alternatively for which the mapping graph $G_{f}$ neither contains two consecutive ascending edges nor two consecutive descending edges. Results for mappings could be obtained by using and extending corresponding results on labelled tree families, so-called alternating trees~\cite{KubPan2010}.
In this context we also want to mention the PhD thesis of Okoth~\cite{Oko2015}, who  studied local extrema in trees (called sources and sinks there); his studies also led to results for the corresponding quantities in mappings.

In this work we analyse fundamental label patterns in random mappings by generalizing the notion of ascending and descending runs from permutations to mappings. When considering the mapping graph $G_{f}$ of a mapping $f$, ascending runs are maximal ascending paths, thus corresponding to iteration sequences $i < f(i) < \dots < f^{r}(i)$ that are not contained in a larger such sequence; analogous for descending runs. We carry out a joint study of the number of ascending and descending runs in mappings by characterizing the generating function of the number of mappings of a certain size with a prescribed number of ascending and descending runs, and analyse the typical behaviour of these label patterns in random mappings by characterizing the limiting distribution behaviour. When restricting the analysis to a single pattern (i.e., either ascending runs or descending runs), one even gets explicit enumeration formul{\ae} via Stirling numbers of the second kind for which we also provide a bijective proof.

In our generating functions approach for an analysis of runs in mappings we also performed a study of the corresponding quantities in rooted labelled trees, and such results for labelled trees might be of interest in their own right. Interestingly, the enumeration formul{\ae} for labelled trees and mappings, respectively, of given size with a prescribed number of ascending runs (or descending runs) are closely related, for which we can also give a bijective argument.

In the combinatorial literature various studies of quantities related to the labelling of trees can be found. Besides the work already mentioned, e.g., there are studies for labelled trees (or forests) concerning the size of the maximal subtree without ascending edges containing the root node \cite{SeoShi2012}, proper vertices \cite{GesSeo2006} (also called leaders, i.e., nodes $x$ with largest label in the whole subtree rooted at $x$), and proper edges \cite{Sho1995} (edges $e=(y,x)$, with $x$ closer to the root, where $x$ has a label larger than all nodes in the subtree rooted at $y$). We further want to mention the very recent work \cite{AndArch2019} showing enumerative results for forests avoiding certain sets of subsequence patterns. Concerning the present studies, the work \cite{Ges1996} of Gessel is of particular interest, where so-called descent and leaves in forests of rooted labelled trees are considered jointly. There, a descent is defined as a node, which has at least one child with a larger label. It turns out that the generating functions presented in Gessel's work are closely related to the ones obtained in our studies of ascending and descending runs in trees. As a consequence, distributional results obtained here can easily be transferred to those quantities. Moreover, an explicit enumeration result for the number of ascending runs in labelled trees can already be obtained from \cite{Ges1996}.

We want to point out that the generating functions approach presented in this work relying on a decomposition of the structures w.r.t.\ the smallest (or largest) labelled element is flexible enough to obtain results for further kind of label patterns and other tree families as well. In particular, as preliminary results show, some questions raised in \cite{AndArch2019} concerning avoidance and occurrence of consecutive patterns of length $3$ in forests of rooted trees could be treated; we will comment on that elsewhere.

The paper is organized as follows: In Section~\ref{sec:PreliminariesResults}, after preliminary comments, we state the main results of this work concerning generating functions, and exact and limiting distribution results for the number of ascending and descending runs in Cayley trees and mappings. In Section~\ref{sec:GF} we carry out the generating functions approach for a joint study of the quantities considered. Exact enumeration results for ascending runs are deduced in Section~\ref{sec:AscendingRuns}, but the main part of this section is devoted to a bijective proof of these result and establishing the correspondence between the number of ascending runs in mappings and trees. Section~\ref{sec:JointStudies} shows a bivariate central limit theorem for the number of ascending and descending runs in labelled trees and mappings. Moreover, we use relations to generating functions occurring in \cite{Ges1996} to prove a bivariate central limit theorem for ascents and leaves in trees.

\section{Preliminaries and main results}\label{sec:PreliminariesResults}

\subsection{Preliminaries}\label{ssec:Preliminaries}

In our studies we use the close relation between mapping graphs of functions and rooted labelled trees, i.e., Cayley trees. In this work, when we speak about a labelled tree, we always mean a rooted unordered tree (i.e., there is no ordering on the subtrees of any node), where every node in a tree of size $n$ carries a distinct integer from the set $[n]$ as a label. As mentioned earlier, a node and its label in trees or mapping graphs are used synonymously. In accordance with the connection to mapping graphs we consider the edges in the tree as oriented towards the root node. Thus, throughout this work, instead of using the terms children or parent of a node, we speak about in-neighbours and out-neighbour, respectively.
The number $T_{n}$ of labelled trees of size $n$, where size is always measured by the number of nodes, is given by $T_{n} = n^{n-1}$, a formula attributed to Arthur Cayley. When speaking about a random tree of size $n$, one of these $T_{n}$ trees is chosen with equal probability. The exponential generating function $T(z) \colonequals \sum_{n \geq 1} T_{n} \frac{z^{n}}{n!}$ of labelled trees, the so-called tree function, is characterized via the functional equation
\begin{equation}\label{eqn:TreeFunction}
  T(z)=z e^{T(z)},
\end{equation}
and is thus closely related to the Lambert $W$-function \cite{CorGonHarJefKnu1996}.

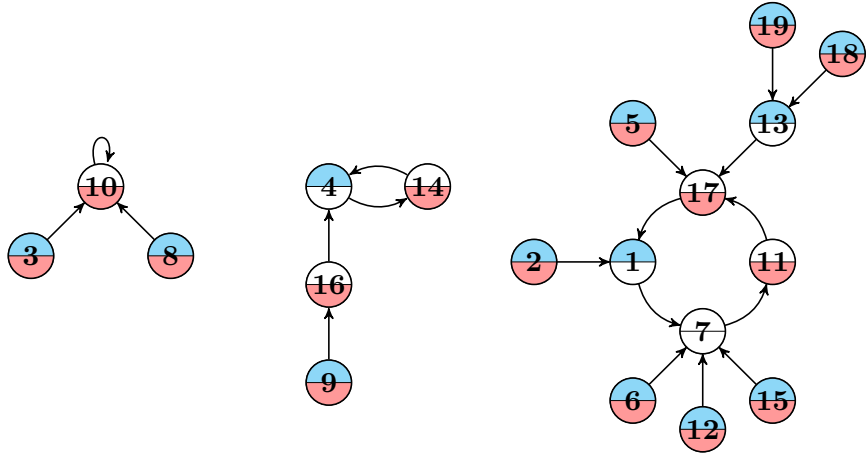
\begin{figure}
\centering
\begin{tikzpicture}[->,>=stealth',
auto,node distance=1.3cm,
                    semithick, scale=0.5]
  \tikzstyle{every state}=[draw=black,circle,fill=none,minimum size=17pt,inner sep=0pt]
	\tikzstyle{bluereds}=[draw=black,circle split,circle split part fill={black,black!40},line width=0mm,minimum size=17pt,inner sep=0pt]
	\tikzstyle{blues}=[draw=black,circle split,circle split part fill={black,white},line width=0mm,minimum size=17pt,inner sep=0pt]
	\tikzstyle{reds}=[draw=black,circle split,circle split part fill={white,black!40},line width=0mm,minimum size=17pt,inner sep=0pt]
	\tikzstyle{whites}=[draw=black,circle split,circle split part fill={white,white},line width=0mm,minimum size=17pt,inner sep=0pt]

  \node[state,blues]			(1)                    {};
  \node[state,whites]         (7) [below  right of=1] {};
  \node[state,reds]         (11) [above right of=7] {};
  \node[state,reds]         (17) [above left of=11] {};
  \node[state,bluereds]         (5) [above left of=17]       {};
  \node[state,bluereds]         (2) [left of=1]       {};
  \node[state,bluereds]         (6) [below left of=7]       {};
  \node[state,bluereds]         (12) [below of=7]       {};
  \node[state,bluereds]         (15) [below right of=7]       {};
  \node[state,blues]         (13) [above right of=17]       {};
  \node[state,bluereds]         (19) [above of=13]       {};
  \node[state,bluereds]         (18) [above right of=13]       {};
  \node[state]			(1label)                    {\textcolor{gray}{$\bm{1}$}};
  \node[state]         (7label) [below  right of=1] {$\bm{7}$};
  \node[state]         (11label) [above right of=7] {$\bm{11}$};
  \node[state]         (17label) [above left of=11] {$\bm{17}$};
  \node[state]         (5label) [above left of=17]       {\textcolor{white}{$\bm{5}$}};
  \node[state]         (2label) [left of=1]       {\textcolor{white}{$\bm{2}$}};
  \node[state]         (6label) [below left of=7]       {\textcolor{white}{$\bm{6}$}};
  \node[state]         (12label) [below of=7]       {\textcolor{white}{$\bm{12}$}};
  \node[state]         (15label) [below right of=7]       {\textcolor{white}{$\bm{15}$}};
  \node[state]         (13label) [above right of=17]       {\textcolor{gray}{$\bm{13}$}};
  \node[state]         (19label) [above of=13]       {\textcolor{white}{$\bm{19}$}};
  \node[state]         (18label) [above right of=13]       {\textcolor{white}{$\bm{18}$}};

  \path  	(2) edge (1)
				(1) edge [bend right]  (7) 
				(7) edge [bend right]   (11) 
				(11) edge [bend right]   (17) 
				(5) edge   (17)
				(6) edge (7)
				(12) edge (7)
				(15) edge (7)
				(19) edge (13)
				(18) edge (13)
				(13) edge (17)
				(17) edge [bend right]   (1) ;

\begin{scope}[xshift=-14cm,yshift=2cm]
  \node[state,reds]			(10)                    {};
  \node[state,bluereds]         (3) [below  left of=10] {};
  \node[state,bluereds]         (8) [below right of=10] {};
  \node[state]			(10label)                    {$\bm{10}$};
  \node[state]         (3label) [below  left of=10] {\textcolor{white}{$\bm{3}$}};
  \node[state]         (8label) [below right of=10] {\textcolor{white}{$\bm{8}$}};

  \path  	(8) edge (10)
				(3) edge (10)
				(10) edge [loop above] (10) ;
\end{scope}

\begin{scope}[xshift=-8cm, yshift=2cm]
  \node[state,blues]			(4)                    {};
  \node[state,reds]         (14) [right of=4] {};
  \node[state,reds]         (16) [below of=4] {};
  \node[state,bluereds]         (9) [below of=16] {};
  \node[state]			(4)                    {\textcolor{gray}{$\bm{4}$}};
  \node[state]         (14) [right of=4] {$\bm{14}$};
  \node[state]         (16) [below of=4] {$\bm{16}$};
  \node[state]         (9) [below of=16] {\textcolor{white}{$\bm{9}$}};

  \path  	(9) edge (16)
				(16) edge (4)
				(4) edge [bend right] (14) 
				(14) edge [bend right] (4) ;
\end{scope}

\end{tikzpicture}
\caption{Mapping graph of a $19$-mapping with three connected components, which consist of one, two and four Cayley trees, respectively. Starting nodes of ascending runs are coloured black (filling the upper semicircle), starting nodes of descending runs are coloured grey (filling the lower semicircle). Thus this mapping has $13$ ascending runs and $15$ descending runs.}
\label{Mappings/fig:mapping_graph}
\end{figure}

The structure of mapping graphs is simple and is well described in \cite{FlaSed2009}: the weakly connected components of such graphs are just cycles of Cayley trees.  That is, each connected component consists of rooted labelled trees whose root nodes are connected by directed edges such that they form a cycle. For an example of the functional digraph of a $19$-mapping, see Figure~\ref{Mappings/fig:mapping_graph}.
Using the symbolic method (see~\cite{FlaSed2009} for an introduction), this structural connection between Cayley trees and mappings can also easily be taken to the level of generating functions and yields the relation $M(z) = 1/(1-T(z))$, with $M(z) \colonequals \sum_{n \ge 0} M_{n} \frac{z^{n}}{n!}$ the exponential generating function of the number $M_{n} = n^{n}$ of $n$-mappings. For the problem considered here, this relation between labelled trees and mappings cannot be applied directly, but we will rather use a decomposition of the objects with respect to the node with smallest label. Such a decomposition takes care of the quantities studied, but yields more involved relations leading to linear or quasi-linear first-order partial differential equations (PDEs) for the corresponding generating functions.

Considering a mapping graph or a labelled tree, an ascending run (or descending run) is a maximal directed path $x_{1}, x_{2}, \dots, x_{r}$ of nodes in the graph forming an ascending (or descending) sequence of labels, $x_{1} < x_{2} < \cdots < x_{r}$ (or $x_{1} > x_{2} > \cdots > x_{r}$). Crucial to our approach is a simple characterization of the starting node of an ascending run: The node $x$ is the starting node of an ascending run exactly if $x$ doesn't have an in-neighbour with a smaller label. Analogously, $x$ is the starting node of a descending run iff there is no in-neighbour of $x$ with a larger label. For mappings, one could also say that $x$ doesn't have a smaller (or larger) preimage. In Figure~\ref{Mappings/fig:mapping_graph} the starting nodes of ascending and descending runs, respectively, are coloured black and grey.

Throughout this work we use $X \stackrel{(d)}{=} Y$ to denote equality in distribution of random variables (r.v.\ for short) $X$ and $Y$, whereas $X_{n} \xrightarrow{(d)} X$ means weak convergence, i.e., convergence in distribution, of the sequence of r.v.\ $X_{n}$ to the r.v.\ $X$. $\mathcal{N}(\mu, \sigma^{2})$ denotes the normal distribution with mean $\mu$ and variance $\sigma^{2}$, and $\mathcal{N}(\bm{\mu},\bm{\Sigma})$ a two-dimensional normal distribution with mean vector $\bm{\mu} \in \mathbb{R}^{2}$ and variance-covariance matrix $\bm{\Sigma} \in \mathbb{R}^{2 \times 2}$. Furthermore, we use $x^{\underline{k}} \colonequals x \cdot (x-1) \cdots (x-k+1)$ for the falling factorials and $\Stir{n}{m}$ for the Stirling numbers of the second kind, i.e, the number of partitions of a set of $n$ labelled objects into $k$ nonempty unlabelled subsets.

\subsection{Results}

\subsubsection{Ascending runs}

\begin{theorem}\label{thm:FnmGnm}
Let $\hat{F}_{n,m}$ be the number of rooted labelled trees with $n$ nodes and $m$ ascending runs, and $\hat{G}_{n,m}$ be the number of $n$-mappings with $m$ ascending runs. Then $\hat{F}_{n,m}$ and $\hat{G}_{n,m}$ are given as follows:
\begin{equation*}
  \hat{F}_{n,m} = (n-1)^{\underline{m-1}} \cdot \Stir{n}{m}, \qquad	
	\hat{G}_{n,m} = n \hat{F}_{n,m} = n^{\underline{m}} \cdot \Stir{n}{m}.
\end{equation*}
\end{theorem}

\begin{theorem}\label{thm:XnaYna}
Let $X_{n}^{[a]}$ and $Y_{n}^{[a]}$ be the random variables counting the number of ascending runs in a random size-$n$ rooted labelled tree and a random $n$-mapping, respectively. Then $X_{n}^{[a]} \stackrel{(d)}{=} Y_{n}^{[a]}$ holds for $n \ge 1$, and expectation and variance are given as follows:
\begin{align*}
  \mathbb{E}(X_{n}^{[a]}) & = \mathbb{E}(Y_{n}^{[a]}) = n \cdot (1-(1-n^{-1})^{n}) = (1-e^{-1}) n + \mathcal{O}(1) \sim 0.632120\ldots \cdot n,\\
	\mathbb{V}(X_{n}^{[a]}) & = \mathbb{V}(Y_{n}^{[a]}) = (e^{-1}-2e^{-2}) n + \mathcal{O}(1) \sim 0.097208\ldots \cdot n.
\end{align*}
Moreover, the normalized r.v.\ $\tilde{X}_{n}^{[a]} \colonequals \frac{X_{n}^{[a]} - \mathbb{E}(X_{n}^{[a]})}{\sqrt{\mathbb{V}(X_{n}^{[a]})}}$ and $\tilde{Y}_{n}^{[a]} \colonequals \frac{Y_{n}^{[a]} - \mathbb{E}(Y_{n}^{[a]})}{\sqrt{\mathbb{V}(Y_{n}^{[a]})}}$ converge in distribution to a standard normal distributed r.v., $\tilde{X}_{n}^{[a]} \stackrel{(d)}{=} \tilde{Y}_{n}^{[a]} \xrightarrow{(d)} \mathcal{N}(0,1)$.
\end{theorem}

\begin{remark}
We may compare the results for the number of ascending runs in labelled trees and mappings with corresponding ones in permutations. Whereas the number of labelled trees and mappings of size $n$ with $m$ ascending runs is related to the Stirling numbers of the second kind, the number $a_{n,m}$ of permutations of $[n]$ with $m$ ascending runs is given by the (shifted) Eulerian numbers (see, e.g.,~\cite{GraKnuPat1994}), $a_{n,m} = \Euler{n}{m-1}$, where $\Euler{n}{m}$ counts the number of permutations of $[n]$ with $m$ ascents (or $m$ descents), i.e., elements in the permutation larger than the preceding one. With $R_{n}$ the r.v.\ counting the number of ascending runs in a randomly chosen permutation of $[n]$, one gets mean $\mathbb{E}(R_{n}) = \frac{n+1}{2}$ and variance $\mathbb{V}(R_{n}) = \frac{n+1}{12}$. As in labelled trees and mappings, the number of runs in permutations converges, after normalization, in distribution to a standard normal distribution (see~\cite{CarKurScoStack1972}): $\frac{R_{n} - \mathbb{E}(R_{n})}{\sqrt{\mathbb{V}(R_{n})}} \xrightarrow{(d)} \mathcal{N}(0,1)$, but the coefficients occurring in the leading asymptotics of mean and variance differ from the ones in trees and mappings.
\end{remark}

\subsubsection{Joint study of ascending and descending runs}

\begin{theorem}\label{thm:FzvwGzvw}
Let $F_{n,m,\ell}$ be the number of rooted labelled trees with $n$ nodes, $m$ ascending and $\ell$ descending runs, $G_{n,m,\ell}$ the number of $n$-mappings with $m$ ascending and $\ell$ descending runs, and $F(z,v,w) \colonequals \sum\limits_{n \ge 1}\sum\limits_{m \ge 0} \sum\limits_{\ell \ge 0} F_{n,m,\ell} \frac{z^{n} v^{m} w^{\ell}}{n!}$ and $G(z,v,w) \colonequals \sum\limits_{n \ge 0}\sum\limits_{m \ge 0} \sum\limits_{\ell \ge 0} G_{n,m,\ell} \frac{z^{n} v^{m} w^{\ell}}{n!}$ their generating functions. Then $F \colonequals F(z,v,w)$ is characterized as the solution of the functional equation
\begin{large}
\begin{equation*}
  z = \frac{\ln\left(\frac{(e^{F}-1+v)(e^{F}-1+w)}{vwe^{F}}\right)}{e^{F}-(1-v)(1-w)},
\end{equation*}
\end{large}
and $G \colonequals G(z,v,w)$ is given via
\begin{large}
\begin{equation*}
\begin{textstyle}
  G = \frac{e^{F} \left( e^{F}-(1-v)(1-w)\right)^{2}}{\left( e^{F}-(1-v)(1-w)\right)\left( e^{2F}-(1-v)(1-w)\right)-e^{F}(e^{F}-1+v)(e^{F}-1+w)\ln\left(\frac{(e^{F}-1+v)(e^{F}-1+w)}{vwe^{F}}\right)}.
\end{textstyle}
\end{equation*}
\end{large}
\end{theorem}

\begin{theorem}\label{thm:XnYn}
Let $\bm{X}_{n} \colonequals \left(\begin{smallmatrix}X_{n}^{[a]}\\ X_{n}^{[d]}\end{smallmatrix}\right)$ be the random vector counting the number of ascending $X_{n}^{[a]}$ and descending runs $X_{n}^{[d]}$ in a random size-$n$ rooted labelled tree, and $\bm{Y}_{n} \colonequals \left(\begin{smallmatrix}Y_{n}^{[a]}\\ Y_{n}^{[d]}\end{smallmatrix}\right)$ the corresponding random vector in a random $n$-mapping. 
Then, after suitable normalization, $\bm{X}_{n}$ and $\bm{Y}_{n}$, respectively, converge in distribution to a two-dimensional normal distribution $\mathcal{N}(\bm{\mu},\bm{\Sigma})$ with mean vector $\bm{\mu} = \bm{0} \colonequals \left(\begin{smallmatrix}0\\ 0\end{smallmatrix}\right)$,
\begin{equation*}
  \frac{1}{\sqrt{n}} \left(\bm{X}_{n} - \left(\begin{smallmatrix}1-e^{-1}\\ 1-e^{-1}\end{smallmatrix}\right) \cdot n\right) \xrightarrow{(d)} \mathcal{N}(\bm{0}, \bm{\Sigma}), \qquad
	\frac{1}{\sqrt{n}} \left(\bm{Y}_{n} - \left(\begin{smallmatrix}1-e^{-1}\\ 1-e^{-1}\end{smallmatrix}\right) \cdot n\right) \xrightarrow{(d)} \mathcal{N}(\bm{0}, \bm{\Sigma}),
\end{equation*}
and where the variance-covariance matrix $\bm{\Sigma}$ is given as follows:
\begin{equation*}
\begin{textstyle}
  \bm{\Sigma} = \left(\begin{array}{cc}e^{-1}-2e^{-2} & e^{-1}-3e^{-2}\\e^{-1}-3e^{-2} & e^{-1}-2e^{-2}\end{array}\right)
	= \left(\begin{array}{rr}0.097208\ldots & -0.038126\ldots\\-0.038126\ldots & 0.097208\ldots\end{array}\right).
\end{textstyle}
\end{equation*}
\end{theorem}

\section{Generating functions approach}\label{sec:GF}

\subsection{Runs in labelled trees}\label{ssec:GF_RunsTrees}

In order to perform a joint study of ascending and descending runs in labelled trees we will think about trees where each node is coloured black if it is the starting node of a maximal ascending run and  grey if it is the starting node of a maximal descending run. Since nodes can be coloured black and grey simultaneously, which happens exactly for leaves, we might think that each node contains two buttons, a black button and a grey button, which could be pressed or not. Recall that for any labelled tree a node $x$ is the starting node of a maximal ascending run (and thus coloured black), iff each in-neighbour of $x$ has a label larger than $x$, whereas it is the starting node of a maximal descending run (and thus coloured grey), iff each in-neighbour of $x$ has a label smaller than $x$. Let us now introduce the combinatorial family $\mathcal{F}$ of labelled trees with nodes coloured as described before.

\begin{figure}
\begin{center}
\includegraphics[height=4cm]{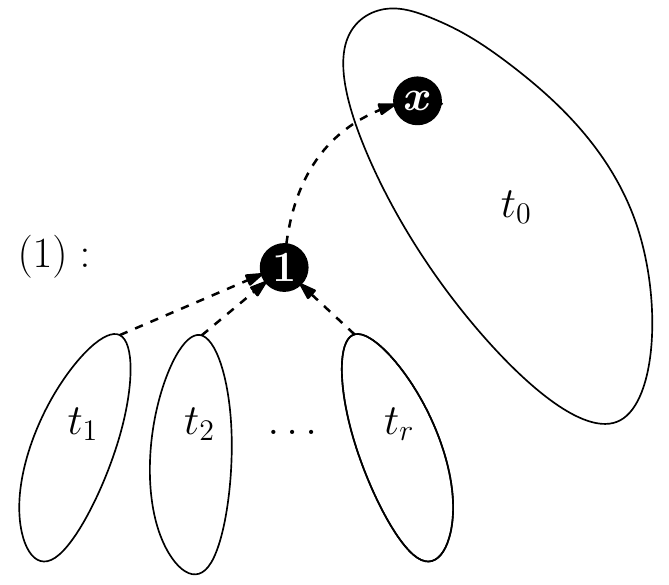} \enspace
\includegraphics[height=4cm]{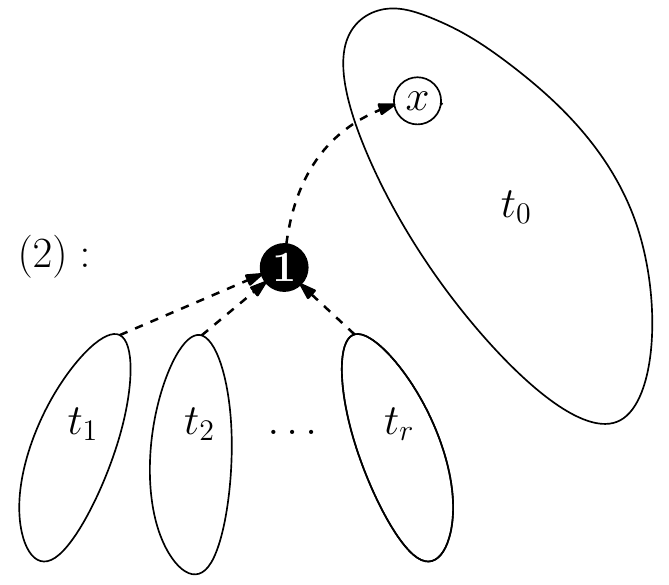} \enspace
\includegraphics[height=4cm]{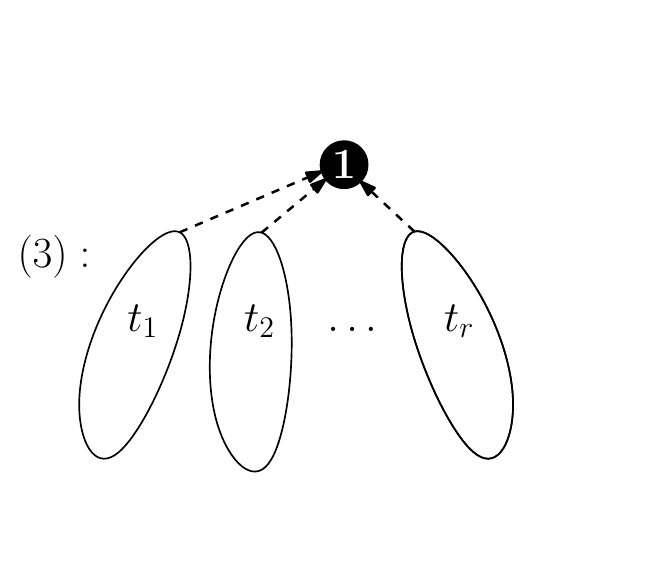}
\end{center}
\caption{Decomposition of a tree $t$ with respect to the node with smallest label, where the three cases described below may occur.}
\label{fig:TreeDecomposition}
\end{figure}

Our approach is based on the decomposition of a labelled tree $t$ w.r.t.\ the node with smallest label $1$ into the node $1$, $r \ge 0$ subtrees $t_{1}, \dots, t_{r}$ attached to $1$ and, if $1$ is not the root of $t$, a subtree $t_{0}$, where node $1$ is attached to one of its nodes $x$. Note that the node $1$ is always the starting node of an ascending run and thus coloured black. Furthermore the node $1$ is in $t$ coloured grey iff it does not have in-neighbours, thus $r=0$ in the above decomposition. Three cases can occur (see Figure~\ref{fig:TreeDecomposition}): 
\begin{itemize}
\item[$(1)$] Node $1$ is not the root of $t$ and $x$ is a black node in $t_{0}$: then the total number of black nodes in $t$ is the sum of the number of black nodes in $t_{0}, t_{1}, \dots t_{r}$, since in $t$ node $x$ loses the black colour, whereas the black node $1$ is added.
\item[$(2)$] Node $1$ is not the root of $t$ and $x$ is not black in $t_{0}$: then the total number of black nodes in $t$ is one (for node $1$) plus the sum of the number of black nodes in $t_{0}, t_{1}, \dots t_{r}$, since the colour of $x$ in $t$ remains unchanged.
\item[$(3)$] Node $1$ is the root of $t$: then the total number of black nodes in $t$ is one (for node $1$) plus the sum of the number of black nodes in $t_{1}, \dots t_{r}$.
\end{itemize}
Each of these cases can be divided into two subcases $(a)$ and $(b)$ depending on whether $r>0$ or $r=0$:
\begin{itemize}
\item[$(a)$] $r>0$: Then the total number of grey nodes in $t$ is the sum of the number of grey nodes in the subtrees $t_{0}$ (if occurring), $t_{1}, \dots t_{r}$.
\item[$(b)$] $r=0$: Then the total number of grey nodes in $t$ is one (for node $1$) plus the sum of the number of grey nodes in the subtree $t_{0}$.
\end{itemize}

To gain a symbolic equation for $\mathcal{F}$ using this decomposition we use basic combinatorial constructions (see \cite{FlaSed2009}) as the disjoint union $+$, the partition product $\ast$ and the set-construction $\text{\textsc{Set}}$ of labelled families; furthermore  $\mathcal{A}^{\Box} \ast \mathcal{B}$ denotes the boxed-product of the families $\mathcal{A}$ and $\mathcal{B}$, where the smallest label $1$ has to be contained in the $\mathcal{A}$-component. With $\mathcal{Z}$ we denote an atomic element, i.e., a vertex, and with $\epsilon$ an empty structure. Furthermore we use marking-operators: $\Theta_{\mathcal{Z}}(\mathcal{A})$ contains all structures obtained by distinguishing (i.e., marking) one node in an object of $\mathcal{A}$; to mark a black vertex or a grey vertex we use the markers $B$ and $Y$, respectively, and $\Theta_{B}(\mathcal{A})$ contains all structures obtained by distinguishing a black node in an object of $\mathcal{A}$. With these constructions the above decomposition can be described formally as follows, where the summands in the formal equation correspond to the cases occurring:
\begin{equation}\label{eqn:F_SymbolicEqn}
\begin{split}
  \mathcal{F} & = \mathcal{Z}^{\Box} \ast \Theta_{B}(\mathcal{F}) \ast \left( \text{\textsc{Set}}(\mathcal{F})\setminus\{\epsilon\} + \{\epsilon\} \times \{Y\}\right) \\
	& \quad \mbox{} + \mathcal{Z}^{\Box} \ast \left( \Theta_{\mathcal{Z}}(\mathcal{F}) \setminus \Theta_{B}(\mathcal{F})\right) \ast \left( \text{\textsc{Set}}(\mathcal{F})\setminus\{\epsilon\} + \{\epsilon\} \times \{Y\}\right) \times \{B\} \\
	& \quad \mbox{} + \mathcal{Z}^{\Box} \ast \left( \text{\textsc{Set}}(\mathcal{F})\setminus\{\epsilon\} + \{\epsilon\} \times \{Y\}\right) \times \{B\}.
\end{split}
\end{equation}

We introduce the trivariate generating function
\begin{align*}
  F(z,v,w) & \colonequals \sum_{t \in \mathcal{F}} \frac{z^{|t|} \, v^{\text{$\sharp$ black nodes in $t$}} \, w^{\text{$\sharp$ grey nodes in $t$}}}{|t|!} = \sum_{n \ge 1} \sum_{m \ge 0} \sum_{\ell \ge 0} F_{n,m,\ell} \, \frac{z^{n} v^{m} w^{\ell}}{n!} \\
	& = \sum_{n \ge 1} \sum_{m \ge 0} \sum_{\ell \ge 0} n^{n-1} \mathbb{P}\big\{X_{n}^{[a]} = m \text{ and } X_{n}^{[d]}=\ell\big\} \frac{z^{n} v^{m} w^{\ell}}{n!},
\end{align*}
where $F_{n,m,\ell}$ denotes the number of labelled trees of size $n$ with $m$ ascending runs and $\ell$ descending runs, and the r.v.\ $X_{n}^{[a]}$ and $X_{n}^{[d]}$ count the number of ascending runs and descending runs, respectively, in a random labelled tree of size $n$. Then, by applying the symbolic method, the formal equation \eqref{eqn:F_SymbolicEqn} yields the following first-order quasilinear PDE for $F \colonequals F(z,v,w)$:
\begin{equation*}
  F_{z} = vF_{v} (e^{F}-1+w) + v(zF_{z}-vF_{v})(e^{F}-1+w) + v(e^{F}-1+w),
\end{equation*}
with initial condition $F(0,v,w)=0$. Note that the boxed-product $\mathcal{C} = \mathcal{A}^{\Box} \ast \mathcal{B}$ 
yields the equation $C_{z} = A_{z} \cdot B$ at the level of generating functions. Moreover, since the marking operators $\Theta_{\mathcal{Z}}$ and $\Theta_{B}$ applied to $\mathcal{F}$ generate $n F_{n,m,\ell}$ and $m F_{n,m,\ell}$ different trees of size $n$ with $m$ ascending and $\ell$ descending runs, respectively, this leads to expressions $z F_{z}$ and $v F_{v}$ in the above equation.
This PDE can be rewritten as follows:
\begin{equation}\label{eqn:Fzvw_PDE}
  \left( 1-vz(e^{F}-1+w)\right)F_{z} - v(1-v)(e^{F}-1+w)F_{v}-v(e^{F}-1+w)=0,
\end{equation}

The solution of \eqref{eqn:Fzvw_PDE} can be obtained by a standard application of the method of characteristics for first-order quasilinear PDEs (see, e.g., \cite{Eva2010}). We give a sketch of the computations, since for the corresponding study of runs in mappings we require a first integral occurring here. Introducing a function $f=f(z,v,F)$ and assuming $f(z,v,F(z,v))=\text{const.}$ (we consider $w$ as a parameter), we obtain after taking partial derivatives the following PDE for $f$:
\begin{equation*}
  \left( 1-vz(e^{F}-1+w)\right)f_{z} -v(1-v)(e^{F}-1+w)f_{v}+v(e^{F}-1+w)f_{F}=0.
\end{equation*}
To find solutions of the PDE we consider the system of characteristic equations (by assuming that the variables occurring are dependent on a parameter $t$, $z=z(t)$, $v=v(t)$, $F=F(t)$, and using the notation $\dot{z} = \frac{dz}{dt}$, etc.):
\begin{equation}\label{eqn:F_CharacteristicEquations}
  \dot{z} = 1-vz(e^{F}-1+w), \quad 	\dot{v} = -v(1-v)(e^{F}-1+w), \quad \dot{F} = v(e^{F}-1+w).
\end{equation}
From the second and the third characteristic equation \eqref{eqn:F_CharacteristicEquations} we easily get the first integral $\frac{e^{F}}{1-v} = C_{1} = \text{const.}$. By using this result, the first and the second characteristic equation \eqref{eqn:F_CharacteristicEquations} yield, by solving a first-order linear ordinary differential equation, the first integral $ze^{F}-\frac{e^{F}}{e^{F}-(1-v)(1-w)} \ln(\frac{e^{F}-1+w}{v}) = C_{2} = \text{const.}$ Combining them, we deduce that the general solution of \eqref{eqn:Fzvw_PDE} satisfies
\begin{equation*}
  z e^{F} - \frac{e^{F}}{e^{F}-(1-v)(1-w)} \ln\left( \frac{e^{F}-1+w}{v}\right) = h\left( \frac{e^{F}}{1-v}\right) ,
\end{equation*}
with a certain differentiable function $h(x)$. By taking into account the initial condition $F(0,v,w)=0$, we obtain the characterization $h(x) = \frac{x}{1-x-w} \ln(\frac{wx}{x-1})$, which shows that $F(z,v,w)$ is indeed solution of the functional equation stated in Theorem~\ref{thm:FzvwGzvw}:
\begin{equation}\label{eqn:Fzvw_Solution}
  z = \frac{\ln\left(\frac{(e^{F}-1+v)(e^{F}-1+w)}{vwe^{F}}\right)}{e^{F}-(1-v)(1-w)}.
\end{equation}

\subsection{Runs in mappings}

In order to study ascending and descending runs in $n$-mappings it suffices to consider the weakly connected components, so-called connected mappings. Combinatorially, mappings and connected mappings are linked by the $\text{\textsc{Set}}$-construction and thus we can easily transfer results from one family to the other. We start by considering connected mappings for which, analogously to our previous analysis of labelled trees, each node is coloured black if it is the starting node of a maximal ascending run and grey if it is the starting node of a maximal descending run. Note that for a connected mapping there could occur a cycle of length one leading to an in-neighbour with the same label; therefore we use the characterization given in Section~\ref{ssec:Preliminaries}, i.e., a node $x$ is the starting node of an ascending run iff there is no in-neighbour of $x$ with a smaller label; analogous for descending runs. We introduce the combinatorial family $\mathcal{C}$ of connected mappings with nodes coloured as described before.

\begin{figure}
\begin{center}
\includegraphics[height=4cm]{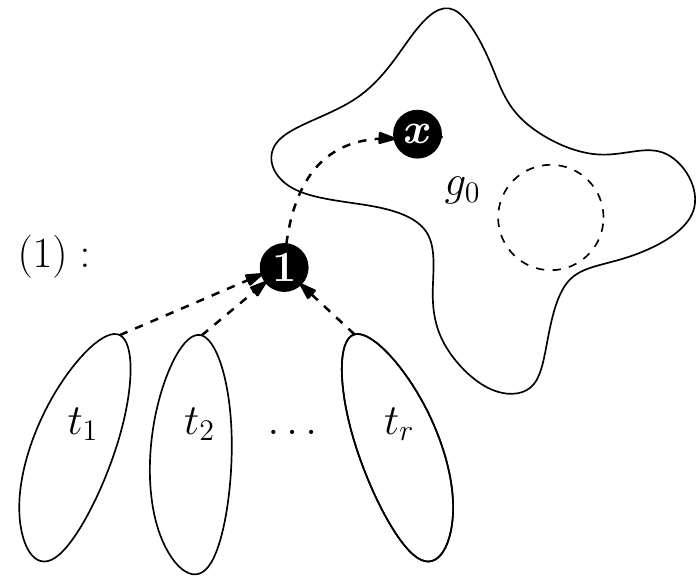} \enspace
\includegraphics[height=4cm]{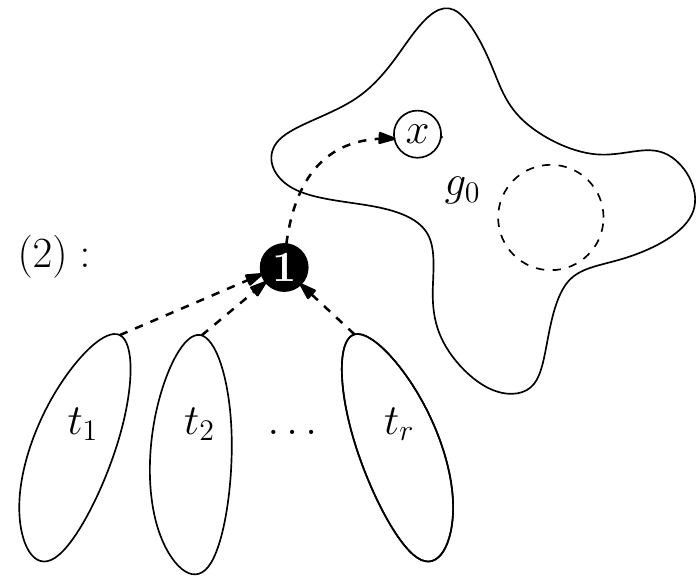} \enspace
\includegraphics[height=4cm]{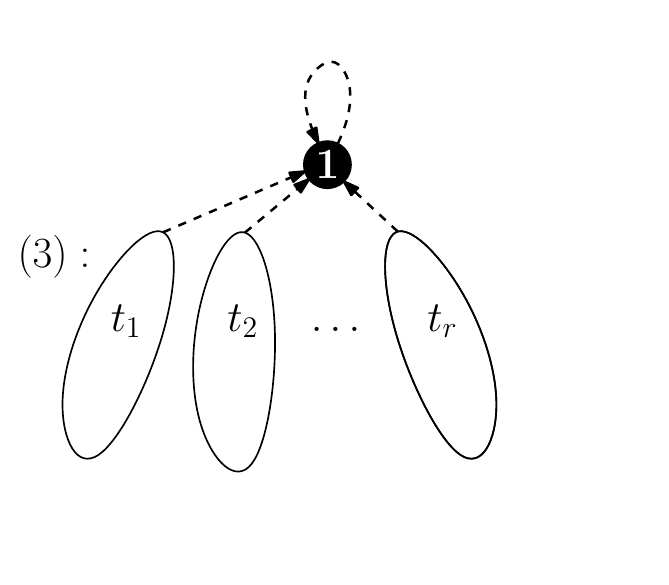}\newline
\includegraphics[height=4cm]{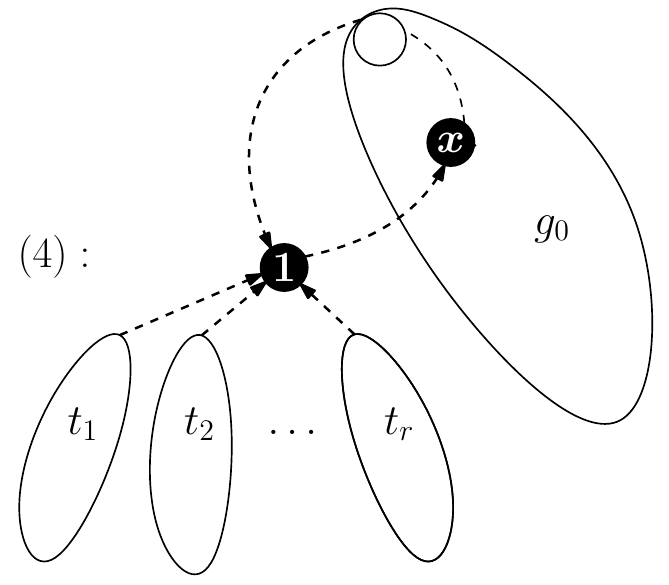} \enspace
\includegraphics[height=4cm]{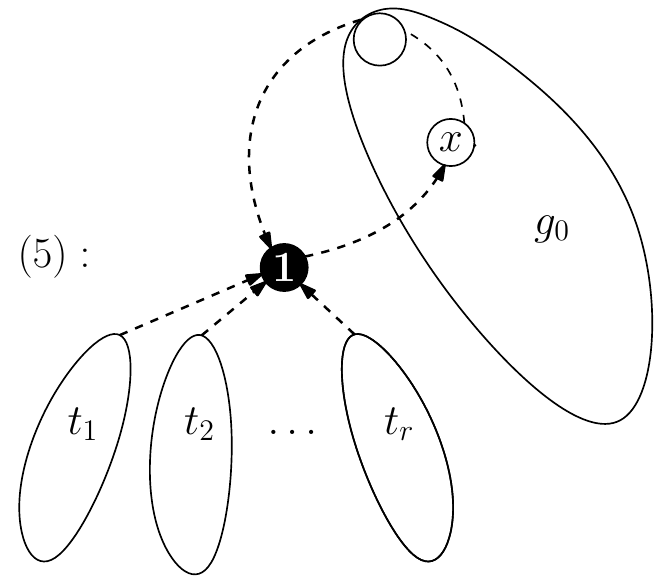}
\end{center}
\caption{Decomposition of a connected mapping $c$ with respect to the node with smallest label, where the five cases described below may occur.}
\label{fig:MappingDecomposition}
\end{figure}

Our approach relies on the decomposition of a connected mapping $c$ w.r.t.\ the node with smallest label $1$ into the node $1$, $r \ge 0$ subtrees $t_{1}, \dots, t_{r}$ attached to $1$ and, if $1$ is not part of a loop, a structure $g_{0}$, where the node $1$ is attached to one of its nodes $x$. Depending on whether $1$ is contained in a cycle or not, $g_{0}$ itself is a subtree or a connected mapping. The node $1$ is always the starting node of an ascending run and thus coloured black. Furthermore the node $1$ is coloured grey iff it does not have in-neighbours (others than $1$, if $1$ is part of a loop). Five cases can occur (see Figure~\ref{fig:MappingDecomposition}): 
\begin{itemize}
\item[$(1)$] Node $1$ is not contained in a cycle of $c$ and $x$ is a black node in the connected mapping $g_{0}$: then the total number of black nodes in $c$ is the sum of the number of black nodes in $g_{0}, t_{1}, \dots t_{r}$, since in $c$ the node $x$ loses the black colour, whereas the black node $1$ is added.
\item[$(2)$] Node $1$ is not contained in a cycle of $c$ and $x$ is not black in the connected mapping $g_{0}$: then the total number of black nodes in $c$ is one (for node $1$) plus the sum of the number of black nodes in $g_{0}, t_{1}, \dots t_{r}$, since the colour of $x$ in $c$ remains unchanged.
\item[$(3)$] Node $1$ is part of a loop in $c$: then the total number of black nodes in $c$ is one (for node $1$) plus the sum of the number of black nodes in $t_{1}, \dots t_{r}$.
\item[$(4)$] Node $1$ is part of a non-loop cycle of $c$ and $x$ is a black node in the tree $g_{0}$: then the total number of black nodes in $c$ is the sum of the number of black nodes in $g_{0}, t_{1}, \dots t_{r}$, since in $c$ the node $x$ loses the black colour, whereas the black node $1$ is added.
\item[$(5)$] Node $1$ is part of a non-loop cycle of $c$ and $x$ is not black in the tree $g_{0}$: then the total number of black nodes in $c$ is one (for node $1$) plus the sum of the number of black nodes in $g_{0}, t_{1}, \dots t_{r}$, since the colour of $x$ in $c$ remains unchanged.
\end{itemize}
Each of the cases $(1)$--$(3)$ can be divided into two subcases $(a)$ and $(b)$ depending on whether $r>0$ or $r=0$:
\begin{itemize}
\item[$(a)$] $r>0$: Then the total number of grey nodes in $c$ is the sum of the number of grey nodes in the substructures $g_{0}$ (if occurring), $t_{1}, \dots t_{r}$.
\item[$(b)$] $r=0$: Then the total number of grey nodes in $c$ is one (for node $1$) plus the sum of the number of grey nodes in the substructure $g_{0}$.
\end{itemize}

Note that in cases $(4)$ and $(5)$ node $1$ cannot be the starting node of a descending run, thus no further distinction into cases occurs. Using the corresponding family $\mathcal{F}$ of coloured labelled trees introduced in Section~\ref{ssec:GF_RunsTrees} as well as the constructions described there, we can easily translate this decomposition into the following symbolic equation for the family of coloured connected mappings $\mathcal{C}$ (again, the summands in this formal description correspond to the previously described cases):
\begin{align}\label{eqn:C_SymbolicEqn}
  \mathcal{C} & = \mathcal{Z}^{\Box} \ast \Theta_{B}(\mathcal{C}) \ast \left( \text{\textsc{Set}}(\mathcal{F})\setminus\{\epsilon\} + \{\epsilon\} \times \{Y\}\right) \notag \\
	& \quad \mbox{} + \mathcal{Z}^{\Box} \ast \left( \Theta_{\mathcal{Z}}(\mathcal{C}) \setminus \Theta_{B}(\mathcal{C})\right) \ast \left( \text{\textsc{Set}}(\mathcal{F})\setminus\{\epsilon\} + \{\epsilon\} \times \{Y\}\right) \times \{B\} \notag \\
	& \quad \mbox{} + \mathcal{Z}^{\Box} \ast \left( \text{\textsc{Set}}(\mathcal{F})\setminus\{\epsilon\} + \{\epsilon\} \times \{Y\}\right) \times \{B\} \\
	& \quad \mbox{} + \mathcal{Z}^{\Box} \ast \Theta_{B}(\mathcal{F}) \ast \text{\textsc{Set}}(\mathcal{F}) \notag \\
	& \quad \mbox{} + \mathcal{Z}^{\Box} \ast \left( \Theta_{\mathcal{Z}}(\mathcal{F}) \setminus \Theta_{B}(\mathcal{F})\right) \ast \text{\textsc{Set}}(\mathcal{F}) \times \{B\}. \notag
\end{align}

We introduce the trivariate generating function
\begin{equation*}
  C(z,v,w) \colonequals \sum_{c \in \mathcal{C}} \frac{z^{|c|} \, v^{\text{$\sharp$ black nodes in $c$}} \, w^{\text{$\sharp$ grey nodes in $c$}}}{|c|!} = \sum_{n \ge 1} \sum_{m \ge 0} \sum_{\ell \ge 0} C_{n,m,\ell} \, \frac{z^{n} v^{m} w^{\ell}}{n!},
\end{equation*}
where $C_{n,m,\ell}$ denotes the number of connected $n$-mappings with $m$ ascending runs and $\ell$ descending runs. An application of the symbolic methods to the formal equation \eqref{eqn:C_SymbolicEqn} leads to the following first-order linear PDE for $C \colonequals C(z,v,w)$, with $F=F(z,v,w)$ the corresponding generating function for trees studied in Section~\ref{ssec:GF_RunsTrees}:
\begin{equation*}
  C_{z} = vC_{v} (e^{F}-1+w) + v(zC_{z}-vC_{v})(e^{F}-1+w) + v(e^{F}-1+w) + vF_{v} e^{F} + v(zF_{z}-vF_{v})e^{F}.
\end{equation*}
Taking into account \eqref{eqn:Fzvw_PDE}, slight simplifications occur yielding the following PDE together with the initial condition $C(0,v,w)=0$:
\begin{equation}\label{eqn:Czvw_PDE}
  \left( 1-vz(e^{F}-1+w)\right)C_{z} - v(1-v)(e^{F}-1+w)C_{v} = (1+v(1-w)z) F_{z} + v(1-v)(1-w) F_{v}.
\end{equation}

In order to solve this PDE we search for a suitable substitution of variables, such that it can be reduced to an ordinary differential equation; since the coefficients of the partial derivatives in the defining equations \eqref{eqn:Fzvw_PDE} and \eqref{eqn:Czvw_PDE} of the functions $F$ and $C$, respectively, match, this suggests to choose a first integral obtained for $F$. Furthermore, since the function $F$ is given only implicitly via the functional equation \eqref{eqn:Fzvw_Solution}, it is slightly tricky to get well tractable expressions, but it turns out that the following pair of substitutions works fine (where we consider $w$ as a parameter):
\begin{equation*}
  H = H(z,v) \colonequals \ln\left(\frac{(e^{F}-1+v)(e^{F}-1+w)}{vwe^{F}}\right), \qquad K = K(z,v) \colonequals \frac{e^{F}}{1-v}.
\end{equation*}
Namely, the inverse transform for $z=z(H,K)$ and $v=v(H,K)$ is given by
\begin{equation*}
  z = \frac{H K (e^{H}w+K-1)}{(K-1+w)(e^{H}Kw-Kw+K+w-1)}, \qquad v = \frac{(K-1)(K-1+w)}{K(e^{H}w+K-1)},
\end{equation*}
and by introducing $\tilde{C}(H,K) \colonequals C\left( z(H,K),v(H,K)\right)$ we get from \eqref{eqn:Czvw_PDE}, after some computations (done best with the help of a computer algebra system), the equation
\begin{equation*}
\begin{textstyle}
  \frac{\partial}{\partial H}\tilde{C}(H,K)=\frac{e^{H}(1-K)\left( e^{H}K^{2}w+(HK^{2}+K^{2}-HK-2K-w+1)(1-w)\right)w}{(e^{H}Kw-Kw+K+w-1)\left( e^{H}(HK^{2}-K^{2}-HK-w+1)w+(K-1)^{2}(w-1)\right)}.
\end{textstyle}
\end{equation*}
Integrating w.r.t.\ $H$ and adapting to the initial condition leads to the following solution of $\tilde{C}=\tilde{C}(H,K)$:
\begin{equation*}
  \tilde{C}=\ln\left(\frac{(e^{H}Kw-Kw+K+w-1)(1-w-K)}{e^{H} (HK^{2}-K^{2}-HK-w+1)w+(K-1)^{2}(w-1)}\right),
\end{equation*}
and backsubstitution gives the solution of $C(z,v,w)$, which is here omitted. Instead, we are interested in results for arbitrary (not only connected) mappings and thus introduce the trivariate generating function
\begin{equation*}
  G(z,v,w) \colonequals \sum_{n \ge 0} \sum_{m \ge 0} \sum_{\ell \ge 0} G_{n,m,\ell} \, \frac{z^{n} v^{m} w^{\ell}}{n!}
	= \sum_{n \ge 0} \sum_{m \ge 0} \sum_{\ell \ge 0} n^{n} \mathbb{P}\big\{Y_{n}^{[a]} = m \text{ and } Y_{n}^{[d]}=\ell\big\} \frac{z^{n} v^{m} w^{\ell}}{n!},
\end{equation*}
where $G_{n,m,\ell}$ denotes the number of $n$-mappings with $m$ ascending runs and $\ell$ descending runs, and the r.v.\ $Y_{n}^{[a]}$ and $Y_{n}^{[d]}$ count the number of ascending runs and descending runs, respectively, in a random $n$-mapping. Due to the $\text{\textsc{Set}}$-construction leading from connected mappings to mappings, it simply holds that $G=e^{C}$ for the respective generating functions, and the above result for $\tilde{C}(H,K)$ leads to the following solution of $G=G(z,v,w)$ stated in Theorem~\ref{thm:FzvwGzvw}, with $F=F(z,v,w)$ the corresponding generating function for labelled trees:
\begin{equation}\label{eqn:Gzvw_Solution}
\begin{textstyle}
  G = \frac{e^{F} \left( e^{F}-(1-v)(1-w)\right)^{2}}{\left( e^{F}-(1-v)(1-w)\right)\left( e^{2F}-(1-v)(1-w)\right)-e^{F}(e^{F}-1+v)(e^{F}-1+w)\ln\left(\frac{(e^{F}-1+v)(e^{F}-1+w)}{vwe^{F}}\right)}.
\end{textstyle}
\end{equation}
We remark that by differentiating \eqref{eqn:Fzvw_Solution} w.r.t.\ $z$ and comparing with \eqref{eqn:Gzvw_Solution} one further obtains the connection
\begin{equation}\label{eqn:FzvwGzvw_Connection}
  e^{F} F_{z} = (e^{F}-1+v) (e^{F}-1+w) G.
\end{equation}

\section{Ascending runs}\label{sec:AscendingRuns}

\subsection{Exact enumeration}

In this section, we consider ascending runs (without taking into account descending runs) in labelled trees and mappings, for which we can provide exact enumeration results and combinatorial explanations via bijections. Due to symmetry arguments all results also hold for a single study of descending runs. Let $\hat{F}_{n,m}$ and $\hat{G}_{n,m}$ be the number of labelled trees of size $n$ and $n$-mappings, respectively, with $m$ ascending runs, and $\hat{F}(z,v) \colonequals \sum_{n \ge 1}\sum_{m \ge 0} \hat{F}_{n,m} \frac{z^{n} v^{m}}{n!}$ and 
$\hat{G}(z,v) \colonequals \sum_{n \ge 0}\sum_{m \ge 0} \hat{G}_{n,m} \frac{z^{n} v^{m}}{n!}$ the corresponding generating functions. Clearly, it holds that $\hat{F}(z,v) = F(z,v,1)$ and $\hat{G}(z,v) = G(z,v,1)$, with $F$ and $G$ the trivariate generating functions studied in Section~\ref{sec:GF}.
This gives the characterization of $\hat{F} \colonequals \hat{F}(z,v)$ via the functional equation
\begin{equation}\label{eqn:Fzv_Solution}
  z = \frac{\ln\left(\frac{e^{\hat{F}}-1+v}{v}\right)}{e^{\hat{F}}},
\end{equation}
and of $\hat{G} \colonequals \hat{G}(z,v)$ by the relation
\begin{equation}\label{eqn:Gzv_Solution}
  \hat{G} = \frac{e^{\hat{F}}}{e^{\hat{F}}-(e^{\hat{F}}-1+v)\ln\left(\frac{e^{\hat{F}}-1+v}{v}\right)}.
\end{equation}

In order to extract coefficients it turns out to be advantageous to introduce the function $\hat{H} \colonequals \hat{H}(z,v) = \ln\left(\frac{e^{\hat{F}}-1+v}{v}\right)$. Simple manipulations show that $\hat{H}$ is characterized via the functional equation
\begin{equation*}
  z = \frac{\hat{H}}{v(e^{\hat{H}}-1)+1}.
\end{equation*}
We note that it follows from this characterization that $\hat{H} = \sum_{n \ge 1} \sum_{m \ge 0} \hat{H}_{n,m} \frac{z^{n}v^{m}}{n!}$ is the exponential generating function of the number $\hat{H}_{n,m}$ of labelled trees of size $n$ with exactly $m$ internal nodes (i.e., non-leaf nodes). Since $\hat{H}_{z} = \frac{\left( v(e^{\hat{H}}-1)+1\right)^{2}}{v(1-\hat{H})e^{\hat{H}}+1-v}$ and
\begin{equation*}
  \hat{G} = \frac{v(e^{\hat{H}}-1)+1}{v(1-\hat{H})e^{\hat{H}}+1-v},
\end{equation*}
as follows after easy computations, an application of Cauchy's integral formula (or alternatively, by taking formal residues) and taking into account the relation $[z^{n}](e^{z}-1)^{m} = \frac{m!}{n!} \Stir{n}{m}$ for the Stirling numbers of the second kind (see, e.g., \cite{GraKnuPat1994}), gives the explicit result for the coefficients $\hat{G}_{n,m}$ stated in Theorem~\ref{thm:FnmGnm}:
\begin{align}
  \hat{G}_{n,m} & = n! [z^{n} v^{m}] \hat{G}(z,v) = [v^{m}] \frac{n!}{2\pi i} \oint \frac{\hat{G}(z,v)}{z^{n+1}} dz \notag\\
	& = [v^{m}] \frac{n!}{2\pi i} \oint \frac{(v(e^{\hat{H}}-1)+1)^{n+1}}{\hat{H}^{n+1}} \cdot \frac{(v(e^{\hat{H}}-1)+1)}{v(1-\hat{H})e^{\hat{H}}+1-v} \cdot \frac{v(1-\hat{H})e^{\hat{H}}+1-v}{(v(e^{\hat{H}}-1)+1)^{2}} \, d\hat{H} \notag\\
	& = n! [\hat{H}^{n} v^{m}] \left( v(e^{\hat{H}}-1)+1\right)^{n}
	= n! \binom{n}{m} [\hat{H}^{n}] \left( e^{\hat{H}}-1\right)^{m} \notag\\
	& = \frac{n!}{(n-m)!} \Stir{n}{m} = n^{\underline{m}} \Stir{n}{m}. \label{eqn:Gnm_Formula}
\end{align}
The corresponding enumeration result for labelled trees stated in Theorem~\ref{thm:FnmGnm}, 
\begin{equation*}
  \hat{F}_{n,m} = (n-1)^{\underline{m-1}} \Stir{n}{m}, \quad \text{for $n \ge 1$},
\end{equation*}
could be obtained in a similar way by extracting coefficients, with $\hat{F} = \ln\left( v(e^{\hat{H}}-1)+1\right)$. However, taking the derivative of $\hat{F}$ given by \eqref{eqn:Fzv_Solution} w.r.t.\ $z$ easily shows that
\begin{equation*}
  \hat{G} = 1+z \hat{F}_{z},
\end{equation*}
which, at the level of coefficients, gives the following relation and thus also proves above enumeration result for $\hat{F}_{n,m}$:
\begin{equation}\label{eqn:GnmFnm_Relation}
  \hat{G}_{n,m} = n \hat{F}_{n,m}, \quad \text{for $n \ge 1$}.
\end{equation}
Bijective proofs of the explicit enumeration result for $\hat{G}_{n,m}$ and the connection between $\hat{G}_{n,m}$ and $\hat{F}_{n,m}$ are provided in the next subsection.

\subsection{Bijective proofs}

First, we give a bijective proof of the fact that the number $\hat{G}_{n,m}$ of $n$-mappings with $m$ ascending runs can be expressed with the help of the Stirling numbers of the second kind as previously stated.
The idea of the bijection is to successively decompose a mapping into ascending runs. 
This is done by starting with a run ending at the largest element of the mapping, then one ending at the next-largest element that has not been involved yet, and so on. 
The runs then correspond to blocks of the partition.
In order to keep track of how these runs were ``glued'' together and to be able to reconstruct the mapping, we additionally store the image of the last element of each run in the sequence $q$.

We shall prove the following:
\begin{theorem}\label{Mappings/thm:bij_runs_partitions}
 There is a bijection between the set of $n$-mappings with exactly $m$ runs and the set of pairs $(S,q)$, where $S$ is a set-partition of $[n]$ into $m$ parts and $q=(n_{1},\dots,n_{m})$ is an integer sequence of length $m$.
The set partition is given as $S=(S_{1},S_{2}, \dots, S_{m})$ where the parts are ordered decreasingly according to the largest element in each part, i.e., it holds $\max(S_{1}) > \max(S_{2}) > \cdots > \max(S_{m})$.
The sequence $q$ then has to satisfy the following restriction: $n_{j} \in [n] \setminus \left(\bigcup_{i=1}^{j-1} \min\{\ell \in S_{i} : \ell > \max(S_{j})\}\right)$.
\end{theorem}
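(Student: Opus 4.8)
The plan is to exhibit the bijection explicitly, sending an $n$-mapping $f$ with exactly $m$ runs to a pair $(S,x)$ by reading off a canonical family of ascending chains from the functional digraph $G_f$. Consider the \emph{ascending subgraph} $A(f)$ consisting of the edges $(i,f(i))$ of $G_f$ with $i<f(i)$. Following such an edge strictly increases the label, so $A(f)$ is acyclic, and since every node has out-degree at most one in $A(f)$, each weakly connected component of $A(f)$ is an in-tree; its nodes of in-degree $0$ are precisely the run-starts of $f$, of which there are $m$. For a node $v$ that is not a run-start, call its \emph{canonical predecessor} $\mathrm{cp}(v)$ the \emph{largest} $u<v$ with $f(u)=v$; iterating $\mathrm{cp}$ strictly decreases the label and hence terminates at a unique run-start $s(v)$ (set $s(v)=v$ if $v$ is itself a run-start). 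Put $P_w:=\{v:s(v)=w\}$ for each run-start $w$. These $m$ sets partition $[n]$, and one checks that each $P_w$, listed increasingly as $w=p_1<p_2<\cdots<p_k$, satisfies $f(p_t)=p_{t+1}$ for $t<k$; that is, $P_w$ is an ascending $A(f)$-path from $\min(P_w)=w$ up to $p_k$. Order the parts by decreasing maximum, $\max(S_1)>\cdots>\max(S_m)$ (note $\max(S_1)=n$), and set $n_j:=f(\max(S_j))$, $x:=(n_1,\dots,n_m)$. This defines the map $\varphi(f):=(S,x)$.

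For the inverse, given $(S,x)$ as in the statement one builds $f$ by declaring, for each part $S_j=\{p_1<\cdots<p_k\}$, that $f(p_t)=p_{t+1}$ for $t<k$ and $f(\max(S_j))=n_j$; this is plainly a well-defined $n$-mapping. What must be verified is: (a) $\varphi$ really lands in the stated set of pairs; (b) the decoding map really lands among the $n$-mappings with exactly $m$ runs; and (c) the two maps are mutually inverse. Some of this is routine bookkeeping: in any mapping produced by the decoding, a node that is not the minimum of its part has its intra-part predecessor as a smaller preimage, hence is not a run-start, so there are at most $m$ run-starts; and conversely, once one knows that within each part the intra-part predecessor is the \emph{largest} smaller preimage, the $\mathrm{cp}$-chains of $\varphi(f)$ recover the parts.

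The crux — and the main obstacle — is to verify that the exclusion on $x$ is exactly the right one, neither too weak nor too strong. In the forward direction one must show that for $f$ with $m$ runs and $(S,x)=\varphi(f)$, the value $n_j=f(\max(S_j))$ can never equal $\min\{\ell\in S_i:\ell>\max(S_j)\}$ for any $i<j$: if it did, then $\max(S_j)$ would be a smaller preimage of that node which is \emph{larger} than its intra-part predecessor, or (when that node is a part-minimum) would be a run-start of $f$ acquiring a smaller preimage — either way contradicting the way $S$ was read off from the $\mathrm{cp}$-structure of $f$. In the other direction one must show that a pair $(S,x)$ obeying the exclusion decodes to a mapping in which (i) no part-minimum acquires a smaller preimage, so there are exactly $m$ run-starts, and (ii) when $n_j>\max(S_j)$ the node $\max(S_j)$ is not the largest smaller preimage of $n_j$, so that $\varphi$ applied to the decoded mapping returns the $S_j$ unchanged. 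Each of (i) and (ii) reduces to the same combinatorial fact about how ordering the parts by decreasing maximum interacts with taking largest-smaller-preimages, and I would isolate it as a single lemma; carrying out the attendant case analysis (the cases being $n_j\le\max(S_j)$; $n_j>\max(S_j)$ with $n_j$ a part-minimum; and $n_j>\max(S_j)$ otherwise) is where the real work lies.

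Finally, for a fixed partition $S$ the $j-1$ excluded values are distinct (they lie in distinct parts), so there are $\prod_{j=1}^{m}\bigl(n-(j-1)\bigr)=n^{\underline{m}}$ admissible sequences $x$; summing over the $\Stir{n}{m}$ partitions re-proves $R_{n,m}=n^{\underline{m}}\Stir{n}{m}$ of \eqref{Mappings/eqn:mapping_stirling} combinatorially. One could alternatively route the argument through Theorem~\ref{cayley/thm:bijection} and Proposition~\ref{cayley/prop:properties_bijection}, first bijecting trees of size $n$ with $m$ runs with pairs $(S,x')$ where $x'$ has length $m-1$, and then letting the marked node $w$ of the pair $(T,w)$ supply the $m$-th coordinate; but the direct description above seems the cleaner route.
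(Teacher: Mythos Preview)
Your proposal is correct and describes the same bijection as the paper: your canonical-predecessor chains $P_w$ coincide with the sets $S_j$ produced by the paper's greedy colouring (start at the largest uncoloured element, iterate ``largest smaller preimage'' until a run-start), and both record $n_j=f(\max S_j)$. If anything you are more explicit than the paper about what must be checked for the inverse---the paper simply asserts that the restriction on $n_j$ holds ``since otherwise $s_1^{(j)}$ would have been chosen during the construction of $S_i$'' and that the inverse ``can be defined in a straightforward way''---whereas your conditions (i) and (ii) pinpoint exactly the case analysis needed.
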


\begin{figure}
\begin{minipage}[h]{0.45\linewidth}
\centering
\begin{tabular}{|c|c|} \hline
$S_1=\left\lbrace 19\right\rbrace$ & $n_1=13$ \\ \hline
$S_2=\left\lbrace 18\right\rbrace$ & $n_2=13$ \\ \hline
$S_3=\left\lbrace 17, 13\right\rbrace$ & $n_3=1$ \\ \hline
$S_4=\left\lbrace 16, 9\right\rbrace$ & $n_4=4$ \\ \hline
$S_5=\left\lbrace 15\right\rbrace$ & $n_5=7$ \\ \hline
$S_6=\left\lbrace 14, 4 \right\rbrace$ & $n_6=4$ \\ \hline
$S_7=\left\lbrace 12\right\rbrace$ & $n_7=7$ \\ \hline
$S_8=\left\lbrace 11,7,6\right\rbrace$ & $n_8=17$ \\ \hline
$S_9=\left\lbrace 10, 8 \right\rbrace$ & $n_9=10$ \\ \hline
$S_{10}=\left\lbrace 5\right\rbrace$ & $n_{10}=17$ \\ \hline
$S_{11}=\left\lbrace 3\right\rbrace$ & $n_{11}=10$ \\ \hline
$S_{12}=\left\lbrace 2\right\rbrace$ & $n_{12}=1$ \\ \hline
$S_{13}=\left\lbrace 1\right\rbrace$ & $n_{13}=7$ \\ \hline
\end{tabular}
\end{minipage}
\hspace{0.5cm}
\begin{minipage}[h]{0.45\linewidth}
\centering
\usetikzlibrary{automata, arrows,calc}
\begin{tikzpicture}[->,>=stealth',
draw=black!25,auto,node distance=0.8cm,
                    semithick, scale=0.3]
  \tikzstyle{every state}=[font=\footnotesize,draw=none,circle,fill=black!25,minimum size=12pt,inner sep=0pt]]

  \node[state]			(1)                    {$1$};
  \node[state]         (7) [below  right of=1] {$7$};
  \node[state]         (11) [above right of=7] {$11$};
  \node[state]         (17) [above left of=11] {$17$};
  \node[state]         (5) [above left of=17]       {$5$};
  \node[state]         (2) [left of=1]       {$2$};
  \node[state]         (6) [below left of=7]       {$6$};
  \node[state]         (12) [below of=7]       {$12$};
  \node[state]         (15) [below right of=7]       {$15$};
  \node[state]         (13) [above right of=17]       {$13$};
  \node[state]         (15) [below right of=7]       {$15$};
  \node[state]         (19) [above of=13]       {$19$};
  \node[state]         (18) [above right of=13]       {$18$};

\node (1713) at ($(17)!0.5!(13)$) {};

\draw[black,rotate=45] (7) ellipse (3.7cm and 1.1cm);
\draw[black,rotate=45] (19) ellipse (1cm and 1cm);
\draw[black,rotate=45] (18) ellipse (1cm and 1cm);
\draw[black,rotate=45] (1713) ellipse (2.5cm and 1cm);
\draw[black,rotate=45] (15) ellipse (1cm and 1cm);
\draw[black,rotate=45] (12) ellipse (1cm and 1cm);
\draw[black,rotate=45] (5) ellipse (1cm and 1cm);
\draw[black,rotate=45] (2) ellipse (1cm and 1cm);
\draw[black,rotate=45] (1) ellipse (1cm and 1cm);

  \path  	(2) edge (1)
				(1) edge [bend right]  (7) 
				(7) edge [bend right]   (11) 
				(11) edge [bend right]   (17) 
				(5) edge   (17)
				(6) edge (7)
				(12) edge (7)
				(15) edge (7)
				(19) edge (13)
				(18) edge (13)
				(13) edge (17)
				(17) edge [bend right]   (1) ;

\begin{scope}[xshift=5cm,yshift=-8cm]
  \node[state]			(10)                    {$10$};
  \node[state]         (3) [below  left of=10] {$3$};
  \node[state]         (8) [below right of=10] {$8$};

\node (108) at ($(10)!0.5!(8)$) {};
\draw[black,rotate=45] (3) ellipse (1cm and 1cm);
\draw[black,rotate=-45] (108) ellipse (2.5cm and 1cm);

  \path  	(8) edge (10)
				(3) edge (10)
				(10) edge [loop above] (10) ;
\end{scope}

\begin{scope}[xshift=-2cm, yshift=-8cm]
  \node[state]			(4)                    {$4$};
  \node[state]         (14) [right of=4] {$14$};
  \node[state]         (16) [below of=4] {$16$};
\node[state]         (9) [below of=16] {$9$};

\node (169) at ($(16)!0.5!(9)$) {};
\node (144) at ($(14)!0.5!(4)$) {};
\draw[black,rotate=90] (169) ellipse (2.5cm and 1cm);
\draw[black] (144) ellipse (2.5cm and 1cm);

  \path  	(9) edge (16)
				(16) edge (4)
				(4) edge [bend right] (14) 
				(14) edge [bend right] (4) ;
\end{scope}

\end{tikzpicture}
\end{minipage}
\caption{Example of the bijection described in the proof of Theorem~\ref{Mappings/thm:bij_runs_partitions} for the mapping depicted in Figure~\ref{Mappings/fig:mapping_graph}.}
\label{Mappings/tab:ex_bij}
\end{figure}
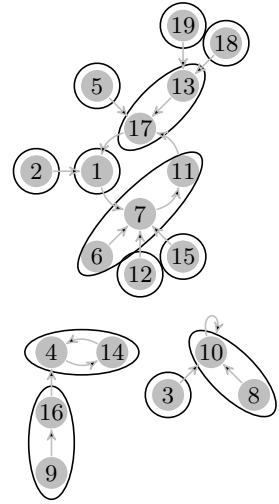

\begin{proof}[Proof]

First we remark that the statement of the theorem indeed will prove \eqref{eqn:Gnm_Formula}, 
since the number of set-partitions of $[n]$ into $m$ parts is given by $\Stir{n}{m}$ and the number of sequences $q$ satisfying the restrictions is given by $n \cdot (n-1) \cdots (n-m+1) = n^{\underline{m}}$.

To prove the theorem we consider an $n$-mapping with exactly $m$ runs and iterate the following procedure, where we colour the elements of the mapping until all elements are coloured.
\begin{itemize}
\item In the $j$-th step we consider the largest element in the mapping, which has not been coloured so far; let us denote it by $s_{1}^{(j)}$. Consider all preimages of $s_{1}^{(j)}$ with a label smaller than $s_{1}^{(j)}$ and, if there are such ones, take the one with largest label amongst them; let us denote this element by $s_{2}^{(j)}$. Then iterate this step with $s_{2}^{(j)}$, i.e., amongst all preimages of $s_{2}^{(j)}$ with a label smaller than $s_{2}^{(j)}$ take the one with largest label, which is denoted by $s_{3}^{(j)}$. After finitely many steps we arrive at an element $s_{k_{j}}^{(j)}$, which does not have preimages with a smaller label. We then define the set $S_{j} \colonequals \{s_{1}^{(j)}, \dots, s_{k_{j}}^{(j)}\}$. 
Note that in the mapping graph this corresponds to a path $s_{k_{j}}^{(j)} \to \dots \to s_{2}^{(j)} \to s_{1}^{(j)}$ with increasing labels on it. 
\item Additionally, we store in $n_{j}$ the image of $s_{1}^{(j)}$. 
Clearly $s_{1}^{(j)}$ is in $[n]$.
Due to the construction further restrictions hold:
Indeed, if $i < j$,  $n_{j}$ cannot be the smallest element in $S_{i}$ larger than $s_{1}^{(j)}$ (which, by construction, exists), since otherwise $s_{1}^{(j)}$ would have been chosen during the construction of the set $S_{i}$. 
\item 
Finally colour all elements of the mapping contained in $S_{j}$.
\end{itemize}

Since the mapping contains exactly $m$ runs and the smallest element in each set $S_{j}$ corresponds to the minimal element of a run, the procedure stops after exactly $m$ steps.
It thus defines a pair of a set partition $S=(S_{1}, \dots, S_{m})$ and a sequence $q=(n_{1}, \dots, n_{m})$ with the given restrictions. 

If the pair $(S,q)$ is given, the corresponding mapping can easily be reconstructed.
Indeed, the partition $S$ gives us a decomposition of the mapping into ascending runs and the sequence $q$ tells us how these runs have to be linked to each other.
The inverse of this bijection can therefore be defined in a straightforward way.
\end{proof}

\begin{example}
The construction of the partition $S$ and the sequence $q$ for the mapping described in Figure~\ref{Mappings/fig:mapping_graph} can be found in Figure~\ref{Mappings/tab:ex_bij}.
Let us exemplarily  explain how the set $S_8$ is constructed.
At this point, the elements in $\bigcup_{i=1}^7 S_i$, i.e., $19, 18, 17, 13, 16, 9, 15,14, 4$ and $12$, have already been coloured.
Thus, the largest element that has not been coloured so far is $11= s_1^{(8)}$.
For $s_2^{(8)}$, we consider the preimages of $11$ that have a label smaller than $11$. The only such element is $7$ and thus $s_2^{(8)}=7$.
Next, the preimages of $7$ are $6, 12$ and $15$ and  thus $s_3^{(8)}=6$.
Since $6$ does not have any preimages, we stop here and $\mathcal{S}_8= \left\lbrace 11,7,6 \right\rbrace$.
Since the image of $11$ is $17$, we set $n_8 =17$.
\demo\end{example}

\smallskip

Now we turn towards a combinatorial explanation of the direct link \eqref{eqn:GnmFnm_Relation} between ascending runs in labelled trees and mappings. In \cite{BruPan2016} the present authors generalized the concept of parking functions to labelled trees and mappings, and in this context they presented a bijection between parking functions on labelled trees and parking functions on mappings; the precise statement of this bijection can be found in Theorem 3.4 in~\cite{BruPan2016}. We adapt the idea of this bijective construction to obtain a bijection between marked trees and mappings, and thus gain (to the best of our knowledge) a new bijective proof of Cayley's formula. Moreover, as we will show after presenting this bijection, it preserves the number of ascending runs in the corresponding objects and thus provides the desired combinatorial proof of the statement.

In the following, we will denote by $t(x)$ the out-neighbour of node $x$ in the tree $t$.
That is, for $x$ a non-root node, $x \neq \textsf{root}(t)$, $t(x)$ is the unique node such that $(x, t(x))$ is an edge in $t$.
For the sake of convenience, let us define $t \left(\textsf{root}(t)\right) =\textsf{root}(t)$. First, we describe the bijection, and afterwards we show that the number of ascending runs will be preserved.

\begin{theorem}\label{thm:Mapping_Tree_Bijection}
  For each $n \ge 1$, there exists a bijection $\varphi$ from the set of pairs $(t,u)$, with $t$ a rooted labelled tree of size $n$ and $u \in t$ a node of $t$, to the set of $n$-mappings. Thus
\begin{equation*}
  n \cdot T_{n} = M_{n}, \quad \text{for $n \ge 1$}.
\end{equation*}
\end{theorem}

\begin{proof}
Given a pair $(t,u)$, we consider the unique path $u \rightsquigarrow \textsf{root}(t)$ from the node $u$ to the root of $t$.
It consists of the nodes $x_{1}=u$, $x_2=t(x_1), \ldots, x_{i+1}=t(x_i), \ldots, x_{r} = \textsf{root}(t)$, for some $r \geq 1$.
We denote by $I = (i_{1}, \dots, i_{k})$, with $i_{1} < i_{2} < \cdots < i_{k}$, for some $k \ge 1$, the indices of the right-to-left maxima in the sequence $x_1, x_2, \ldots, x_r$, i.e.,
\begin{equation*}
  i \in I \Longleftrightarrow x_{i} > x_{j}, \quad \text{for all $j > i$}.
\end{equation*}
The corresponding set of nodes in the path $u \rightsquigarrow \textsf{root}(t)$ will be denoted by $V_{I} \colonequals \{x_{i} : i \in I\}$. It follows from the definition that the root node is always contained in $V_{I}$, i.e., $x_{r} \in V_{I}$.

We can now describe the function $\varphi$ by constructing an $n$-mapping $f$.
The $k$ right-to-left maxima in the sequence $x_1, x_2, \ldots, x_r$ will give rise to $k$ connected components in the functional digraph $G_{f}$.
Moreover, the nodes on the path $u \rightsquigarrow \textsf{root}(t)$ in $t$ will correspond to the cyclic nodes in $G_{f}$. We describe $f$ by defining $f(x)$ for all $x \in [n]$, where we distinguish whether $x \in V_{I}$ or not.

\begin{itemize}
\item[$(a)$] Case $x \notin V_{I}$: We set $f(x) \colonequals t(x)$.
\item[$(b)$] Case $x \in V_{I}$: We set $f(x_{i_1}) \colonequals x_1$ and 
$f(x_{i_{j}}) \colonequals t\left(x_{i_{j-1}}\right)$, for $j >1$.

This means that the nodes on the path $u \rightsquigarrow \textsf{root}(t)$ in $t$ form $k$ cycles $C_{1} \colonequals (x_{1}, \dots, x_{i_{1}})$, \dots, $C_{k} \colonequals (t(x_{i_{k-1}}), \dots, x_{r}=x_{i_k})$ in $G_{f}$.
\end{itemize}

It is now easy to describe the inverse function $\varphi^{-1}$.
Given a mapping $f$, we sort the connected components of $G_f$ in decreasing order of their largest cyclic elements.
That is, if $G_f$ consists of $k$ connected components and $c_i$ denotes the largest cyclic element in the $i$-th component, we have $c_1 > c_2 > \ldots > c_k$.
Then, for every $1 \leq i \leq k$, we remove the edge $(c_i, d_i)$ where $d_i= f(c_i)$.
Next we reattach the components to each other by establishing the edges $(c_i, d_{i+1})$, for every $1 \leq i \leq k-1$.
This leads to the tree $t$.
Note that the node $c_k$ is attached nowhere since it constitutes the root of $t$.
Setting $u=d_1$, we obtain the preimage $(t,u)$ of $f$.
\end{proof}

\begin{example}
Taking the image of the pair $(t,1)$ where the tree $t$ is depicted in Figure~\ref{cayley/fig:example} leads to the mapping described in Figure~\ref{Mappings/fig:mapping_graph}.
We consider the unique path from the node labelled $1$ to the root of $t$. 
It consists of the following nodes: $1,7,11,17,4,14$ and $10$.
Within this sequence, the right-to-left maxima are $17, 14$ and $10$ which are marked by grey nodes in the figure.
When creating the image of $(t,1)$ under the map $\varphi$, the edges $(17,4)$ and $(14,10)$ are removed and the edges $(17,1)$, $(14,4)$ and $(10,10)$ are created.
\demo\end{example}

\begin{figure}
\centering
\begin{tikzpicture}[scale=0.7,grow=down, stealth-, semithick]
\tikzstyle{s'}=[circle,fill=black!25,minimum size=17pt,inner sep=0pt, solid]] 
\tikzstyle{level 1}=[ sibling distance=3cm]  
\tikzstyle{level 3}=[ sibling distance=4cm]  
\tikzstyle{level 4}=[ sibling distance=2.2cm]  
\tikzstyle{level 5}=[ sibling distance=1.2cm]  
 \tikzstyle{s}=[draw=black,circle,fill=none,minimum size=17pt,inner sep=0pt, solid,semithick]] 
 \tikzstyle{emph}=[edge from parent/.style={black,very thick,draw, dotted}]
 \tikzstyle{norm}=[edge from parent/.style={black, semithick, draw,solid}]

\node[s'] (root) {$10$} 
child { node[s]  {$3$}}
child { node[s] {$8$}}
child[emph] { node[s'] (14) {$14$}
	child {node[s] (4) {$4$}
			child[norm] {node[s] (16) {$16$}
					child {node[s] {$9$}}
			}
			child[emph] {node[s'] (17) {$17$}
					child[norm] { node[s] {$5$}}
					child { node[s] (11) {$11$}
							child {node[s] (7) {$7$}
									child[emph] {node[s, ultra thick] (1) {$1$}
											child[norm] {node[s] {$2$}}
									}
									child[norm] {node[s] {$6$}}
									child[norm] {node[s] {$12$}}
									child[norm] {node[s] {$15$}}
							}
					}
					child[norm,dashed] { node[s] {$13$}
							child {node[s] (18) {$18$}}
							child {node[s] (19) {$19$}}
					}
			}
	}
}
;
\begin{scope}[nodes = {left = 7pt}]
 \node  at (1) {$u=x_1$};
\node  at (7) {$x_2$};
\node  at (11) {$x_3$};
 \node  at (17) {$x_4$};
 \node  at (4) {$x_5$};
 \node  at (14) {$x_6$};
 \node  at (root) {$x_7$};
\end{scope}
\end{tikzpicture}
\caption{Taking the image of the pair $(t,1)$ where $t$ is depicted above leads to the mapping described in Figure~\ref{Mappings/fig:mapping_graph}. The unique path from $1$ to the root is marked by dashed edges.
Right-to-left-maxima on this path are marked by grey nodes.}
\label{cayley/fig:example}
\end{figure}
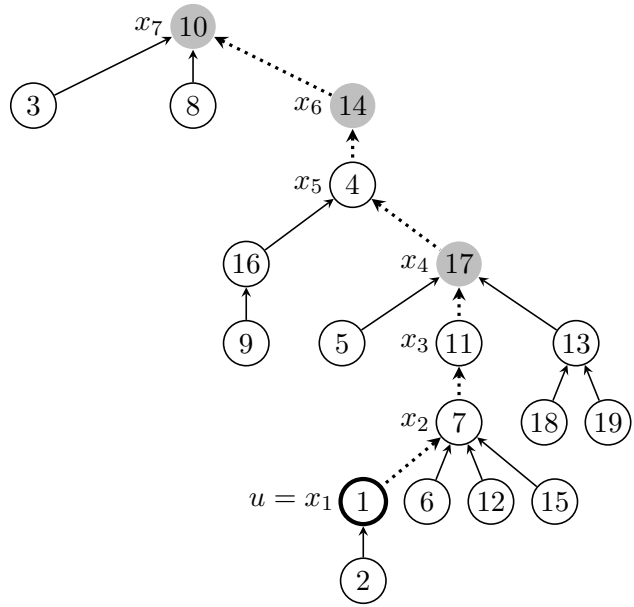

Next we show that $\varphi$ preserves the number of ascending runs, which relies on the following observation: When creating the image of some $(t,u)$ under the map $\varphi$, the only edges that are removed are descending ones. The edges that are created instead in $G_f$ are also descending (to be precise, non-ascending) ones. As a consequence, the property whether a node is (or is not) the starting node of an ascending run is preserved by $\varphi$.
\begin{theorem}
  The bijection $\varphi$ presented in Theorem~\ref{thm:Mapping_Tree_Bijection} preserves the number of ascending runs, i.e., for a pair $(t,u)$ of a labelled tree $t$ and a node $u \in t$, and the mapping $f = \varphi(t,u)$ being the image of $(t,u)$ under the map $\varphi$, it holds that $t$ and $f$ have the same number of ascending runs. Thus
	\begin{equation*}
	  \hat{G}_{n,m} = n \hat{F}_{n,m}.
	\end{equation*}
\end{theorem}
\begin{proof}
In order to prove the statement it suffices to show that in $t$ a node $x$ is not the starting node of an ascending run iff in $G_{f}$ node $x$ is not the starting node of an ascending run, i.e., that for every $x \in t$ it holds:
\begin{gather*}
  \text{In $t$ there exists a node $y < x$ with $t(y)=x$}\\ 
	\Updownarrow\\
	\text{In $f$ there exists an element $y < x$ with $f(y)=x$}.
\end{gather*}
As a consequence the starting nodes of an ascending run and thus the number of ascending runs in $t$ and $f$ must coincide.

To show this, we first assume that node $x$ in $t$ has an in-neighbour $y$ with $y < x$. Then $y$ cannot belong to the set of nodes $V_I$. Thus $f(y)=t(y)=x$ and in the mapping $f$ the node $x$ has a preimage $y$ with $y < x$.

For the other direction we assume that the node $x$ in $f$ has a preimage $y$ with $y<x$. Suppose $y=x_{i_{j}}$, for some $j$ in the construction of $\varphi$. Then either $x=f(y)=t(x_{i_{j-1}}) \leq y$, for $j > 1$, or $x=f(y)=x_{1} \le y$, for $j=1$, which in any case gives a contradiction. Thus $y \notin V_I$ and we have $f(y)=t(y)$. Therefore $y$ is an in-neighbour of $x$ in $t$ that satisfies $y < x$.
\end{proof}

\subsection{Distributional study}

Due to the relation $n^{n} \mathbb{P}\{Y_{n}^{[a]} = m\} = \hat{G}_{n,m} = n \hat{F}_{n,m} = n \cdot n^{n-1} \mathbb{P}\{X_{n}^{[a]} = m\}$ one immediately obtains that the probability mass function of the random variables $X_{n}^{[a]}$ and $Y_{n}^{[a]}$ counting the number of ascending runs in labelled trees of size $n$ and $n$-mappings, respectively, coincide; thus they are equally distributed, $X_{n}^{[a]} \stackrel{(d)}{=} Y_{n}^{[a]}$, and it suffices to only consider $X_{n}^{[a]}$ further. 

Of course, the explicit formula of $\hat{F}_{n,m}$ also characterizes the exact distribution of $X_{n}^{[a]}$; however, in order to get results for the moments or the limiting distribution of $X_{n}^{[a]}$ we prefer to consider the generating function $\hat{F}(z,v)$ characterized via \eqref{eqn:Fzv_Solution}. For the expectation we introduce $E_{1}(z) \colonequals \left.\frac{\partial}{\partial v} \hat{F}(z,v)\right|_{v=1} = \sum_{n \ge 1} n^{n-1} \mathbb{E}(X_{n}^{[a]}) \frac{z^{n}}{n!}$. Taking into account that $\hat{F}(z,1) = T(z)$, with $T=T(z)$ the tree function satisfying \eqref{eqn:TreeFunction}, and taking from \eqref{eqn:Fzv_Solution} the derivative w.r.t.\ $v$ easily shows that $E_{1}(z)$ is given as follows:
\begin{equation*}
  E_{1}(z) = \frac{e^{T}-1}{e^{T}(1-T)}.
\end{equation*}
An application of Cauchy's integral formula, where we use the functional equation \eqref{eqn:TreeFunction} of $T(z)$ and $T'(z) = \frac{e^{T}}{1-T}$, gives the exact and asymptotic result of $\mathbb{E}(X_{n}^{[a]})$ stated in Theorem~\ref{thm:XnaYna}:
\begin{align}
  \mathbb{E}(X_{n}^{[a]}) & = \frac{n!}{n^{n-1}} [z^{n}] E_{1}(z) = \frac{n!}{n^{n-1}} \frac{1}{2\pi i} \oint \frac{e^{(n+1)T(z)}}{T(z)^{n+1}} \frac{e^{T(z)}-1}{e^{T(z)}(1-T(z))} dz \notag\\
	& = \frac{n!}{n^{n-1}} \frac{1}{2\pi i} \oint \frac{e^{(n+1)T}}{T^{n+1}} \frac{e^{T}-1}{e^{T}(1-T)} \frac{1-T}{e^{T}} dT = \frac{n!}{n^{n-1}} [T^{n}] (e^{T}-1) e^{(n-1)T} \notag\\
	& = n - n (1-n^{-1})^{n} = (1-e^{-1}) n + \frac{e^{-1}}{2} + \mathcal{O}(n^{-1}). \label{eqn:Exp1Xna_asymptotic}
\end{align}

For the variance we consider $E_{2}(z) \colonequals \left.\frac{\partial^{2}}{\partial v^{2}} \hat{F}(z,v)\right|_{v=1} = \sum_{n \ge 1} n^{n-1} \mathbb{E}\left( X_{n}^{[a]} (X_{n}^{[a]}-1)\right) \frac{z^{n}}{n!}$, for which we get the following expression  after some computations:
\begin{equation*}
  E_{2}(z) = \frac{(1-e^{2T}) T^{2} + (1-4e^{T}+3e^{2T}) T - 1 + 2e^{T} -e^{2T}}{e^{2T} (1-T)^{3}}.
\end{equation*}
To get the asymptotic behaviour of the coefficients, and thus of the second factorial moment of $X_{n}^{[a]}$, we use a basic application of singularity analysis, where we require the local behaviour of $T(z)$ in a complex neighbourhood of the dominant singularity $z=e^{-1}$ (which is also the dominant singularity of $E_{2}(z)$), which can be found, e.g., in \cite{FlaSed2009}:
\begin{equation}\label{eqn:Tz_LocalExpansion}
  T(z) = 1- \sqrt{2} \sqrt{1-ez} + \frac{2}{3} (1-ez) - \frac{11 \sqrt{2}}{36} (1-ez)^{\frac{3}{2}} + \mathcal{O}\left( (1-ez)^{2}\right).
\end{equation}
This gives the following local expansion of $E_{2}(z)$ around $z=e^{-1}$:
\begin{equation*}
  E_{2}(z) = \frac{\sqrt{2} \, (1-e^{-1})^{2}}{4 (1-ez)^{\frac{3}{2}}} - \frac{\sqrt{2} \, (35-94e^{-1}+83e^{-2})}{48 \sqrt{1-ez}} + \mathcal{O}(1),
\end{equation*}
and, after an application of transfer lemmata, the following asymptotic expansion of the coefficients, for $n \to \infty$:
\begin{equation}\label{eqn:Exp2Xna_asymptotic}
  \mathbb{E}\left( X_{n}^{[a]} (X_{n}^{[a]}-1)\right) = (1-e^{-1})^{2} n^{2} + (-1+3e^{-1}-3e^{-2}) n + \mathcal{O}(1).
\end{equation}
The result for the variance given in Theorem~\ref{thm:XnaYna} easily follows from \eqref{eqn:Exp1Xna_asymptotic} and \eqref{eqn:Exp2Xna_asymptotic}:
\begin{equation}\label{eqn:VarXna_asymptotic}
  \mathbb{V}(X_{n}^{[a]}) = \mathbb{E}\left( X_{n}^{[a]} (X_{n}^{[a]}-1)\right) + \mathbb{E}(X_{n}^{[a]}) - \mathbb{E}(X_{n}^{[a]})^{2}
	= (e^{-1} - 2 e^{-2}) n + \mathcal{O}(1).
\end{equation}

Moreover, when studying the function $\hat{G}(z,v)$ in a complex neighbourhood of $v=1$ and applying the so-called quasi-power theorem of Hwang \cite{Hwa1998} one can deduce the central limit theorem for $X_{n}^{[a]}$ stated in Theorem~\ref{thm:XnaYna}. However, in Section~\ref{ssec:JointLimitingDistribution} we will study the limiting behaviour of the joint distribution of the number of ascending runs $X_{n}^{[a]}$ and descending runs $X_{n}^{[d]}$, from which the central limit theorem for the marginal variables follows as well; thus we omit such computations here.

\section{Joint behaviour of ascending and descending runs}\label{sec:JointStudies}

\subsection{Limiting distribution results}\label{ssec:JointLimitingDistribution}

In order to show, after normalization, convergence in distribution of the random vector $\bm{X}_{n} \colonequals \left(\begin{smallmatrix}X_{n}^{[a]}\\ X_{n}^{[d]}\end{smallmatrix}\right)$ of the number of ascending and descending runs in random labelled trees to a bivariate normal distribution, we will study the asymptotic behaviour of the bivariate moment generating function $\mathbb{E}(e^{X_{n}^{[a]} s_{1} + X_{n}^{[d]} s_{2}})$ in a complex neighbourhood of $(s_{1},s_{2})=(0,0)$ and apply a bivariate extension of the already mentioned quasi power-theorem, which is due to Heuberger \cite{Heu2007}. Actually, an additional contribution of this theorem is to provide bounds on the rate of convergence to the limiting distribution, which thus also hold for $\bm{X}_{n}$, but we decided to omit such results here.

To obtain the asymptotic behaviour of the aforementioned bivariate moment generating function, we will use the concept of singularity perturbation analysis, see \cite{FlaSed2009}, by studying the local behaviour around the dominant singularity of the generating function $F(z,v,w)$ defined via the functional equation \eqref{eqn:Fzvw_Solution}, where one considers $v$, $w$ as fixed parameters chosen in a complex neighbourhood of $1$. When we consider the defining equation of $F$ for $v=w=1$, we obtain the functional equation of the tree function $T(z)$, $z = \frac{F}{e^{F}}$. We quickly recapitulate the considerations yielding the analytic behaviour of this function $F=F(z,1,1)=T(z)$, see \cite{FlaSed2009}. According to the implicit function theorem, when defining $h(F,z) \colonequals \frac{F}{e^{F}}-z$, the equation $h(F,z)=0$ cannot be resolved w.r.t.\ $F$ locally in a unique way for points $(F,z) = (\tau,\rho)$ satisfying
\begin{equation}\label{eqn:htaurho_Defining}
  h(\tau, \rho)=0 \quad \text{and} \quad \left.\frac{\partial}{\partial F}h(F,z)\right|_{(F,z)=(\tau,\rho)}=0, 
\end{equation}
yielding the unique solution $\tau=1$ and $\rho=e^{-1}$; $z=\rho$ is the dominant singularity (a branch point) of $F$ whose local expansion around $\rho$ is given by \eqref{eqn:Tz_LocalExpansion}. Now we consider the function $F=F(z,v,w)$ for $v$, $w$ close to $1$, and thus define
\begin{equation*}
  h(F,z) = \frac{\ln\left(\frac{(e^{F}-1+v)(e^{F}-1+w)}{vwe^{F}}\right)}{e^{F}-(1-v)(1-w)} -z.
\end{equation*}
Analogously, the equation $h(F,z)=0$ cannot be resolved w.r.t.\ $F$ locally in a unique way for points $(F,z) =(\tau,\rho)$, with $\tau=\tau(v,w)$ and $\rho=\rho(v,w)$, satisfying equation \eqref{eqn:htaurho_Defining}, which characterizes $\tau$ as solution of the equation
\begin{multline}
  g(\tau, v, w) := \ln\left(\frac{(e^{\tau}-1+v)(e^{\tau}-1+w)}{vwe^{\tau}}\right)\\
	\mbox{} - \frac{e^{3\tau} - (1-v)(1-w) e^{2\tau} -(1-v)(1-w)e^{\tau} +(1-v)^{2}(1-w)^{2}}{(e^{\tau}-1+v)(e^{\tau}-1+w)e^{\tau}} = 0,\label{eqn:tau_Def}
\end{multline}
and the dominant singularity $z=\rho$ of $F$ is given via
\begin{equation}\label{eqn:rho_Def}
  \rho = \frac{e^{3\tau} - (1-v)(1-w) e^{2\tau} -(1-v)(1-w)e^{\tau} +(1-v)^{2}(1-w)^{2}}{\left( e^{\tau}-(1-v)(1-w)\right)(e^{\tau}-1+v)(e^{\tau}-1+w)e^{\tau}}.
\end{equation}
Note that equation \eqref{eqn:tau_Def} has for $v=w=1$ the unique solution $\tau=1$. Since the function $g$ is analytic around $v=w=1$ and $g_{\tau}(\tau,1,1)=1 \neq 0$ as can be checked easily, another application of the analytic implicit function theorem guarantees that there is a uniquely determined analytic function $\tau(v,w)$ around $v=w=1$ satisfying \eqref{eqn:tau_Def}. Due to \eqref{eqn:rho_Def} this also shows that $\rho(v,w)$ is an analytic function around $v=w=1$. A series expansion of the functional equation \eqref{eqn:Fzvw_Solution} around $F=\tau$ and $z=\rho$ gives after some computations the following local expansion of $F(z,v,w)$ around $z=\rho(v,w)$:
\begin{equation}\label{eqn:Fzvw_LocalExpansion}
  F = \tau - \sqrt{\kappa} \sqrt{1-\frac{z}{\rho}} + \mathcal{O}\left( 1-\frac{z}{\rho}\right),
\end{equation}
with $\kappa=\kappa(v,w)$ given as follows, where we use the abbreviations $\bar{v}=1-v$ and $\bar{w}=1-w$:
\begin{equation*}
  \kappa = \frac{2(e^{\tau}-\bar{v})(e^{\tau}-\bar{w})\left( e^{3\tau}-\bar{v}\bar{w}e^{2\tau}-\bar{v}\bar{w}e^{\tau}+\bar{v}^{2}\bar{w}^{2}\right)}{e^{\tau} \left( e^{4\tau}-4\bar{v}\bar{w}e^{2\tau}+2\bar{v}\bar{w}(\bar{v}+\bar{w})e^{\tau}-\bar{v}^{2}\bar{w}^{2}\right)}.
\end{equation*}
An application of singularity analysis to \eqref{eqn:Fzvw_LocalExpansion} then shows  the following asymptotic behaviour of the coefficients of $F(z,v,w)$, and thus of the probability generating function of the random vector $\bm{X}_{n}$:
\begin{equation*}
  \mathbb{E}\left( v^{X_{n}^{[a]}} w^{X_{n}^{[d]}}\right) = \frac{n!}{n^{n-1}} [z^{n}] F(z,v,w) = {\textstyle{\sqrt{\frac{\kappa}{2}}}} \cdot \frac{1}{(e \rho)^{n}} \cdot \left( 1+\mathcal{O}(n^{-1})\right).
\end{equation*}
Setting $v=e^{s_{1}}$ and $w=e^{s_{2}}$, we obtain the required asymptotic expansion of the bivariate moment generating function:
\begin{equation}\label{eqn:XnaXnd_MGF}
  \mathbb{E}\left( e^{X_{n}^{[a]} s_{1} + X_{n}^{[d]} s_{2}}\right) = e^{U(s_{1},s_{2}) \cdot n + V(s_{1},s_{2})} \cdot \left( 1+\mathcal{O}(n^{-1})\right),
\end{equation}
with functions $U$ and $V$ given as follows:
\begin{equation*}
  U(s_{1},s_{2}) = -\left( 1+\ln\left( \rho(e^{s_{1}},e^{s_{2}})\right)\right), \qquad V(s_{1},s_{2}) = \frac{1}{2} \ln\left( \frac{\kappa(e^{s_{1}},e^{s_{2}})}{2}\right).
\end{equation*}
This is exactly the setting of the bivariate quasi-power theorem due to Heuberger (see \cite{Heu2007}): Under the assumption that $U$ and $V$ are analytic around $s_{1}$, $s_{2}$ and the Hessian matrix $H_{U}(0,0)$ of $U$ evaluated at $s_{1}=s_{2}=0$ is invertible (which is satisfied here), we obtain from \eqref{eqn:XnaXnd_MGF} that
\begin{equation*}
  \mathbb{E}(\bm{X}_{n}) \sim \text{grad} \, U(0,0) \cdot n
\end{equation*}
and
\begin{equation*}
  \frac{1}{\sqrt{n}}\left( \bm{X}_{n} - \text{grad} \, U(0,0) \cdot n\right) \xrightarrow{(d)} \mathcal{N}(\bm{0},\bm{\Sigma}),
\end{equation*}
with $\bm{\Sigma} = H_{U}(0,0)$.
Due to the following local expansion of $U(s_{1},s_{2})$ around $s_{1}=s_{2}=0$:
\begin{align*}
  U(s_{1},s_{2}) & = (1-e^{-1}) s_{1} + (1-e^{-1}) s_{2} \\
	& \quad \mbox{} + \frac{1}{2}(e^{-1}-2e^{-2}) s_{1}^{2} + (e^{-1}-3e^{-2})s_{1}s_{2} + \frac{1}{2}(e^{-1}-2e^{-2}) s_{2}^{2} + \mathcal{O}\left( \|(s_{1},s_{2})\|^{3}\right),
\end{align*}
we obtain the bivariate limiting distribution result for runs in labelled trees $\bm{X}_{n}$ stated in Theorem~\ref{thm:XnYn}. Note that due to $\frac{e^{-1} - 3 e^{-2}}{e^{-1}-2e^{-2}} = -0.3922\ldots$ the r.v.\ $X_{n}^{[a]}$ and $X_{n}^{[d]}$ are negatively correlated, which is in accordance with the intuition.

\smallskip

Due to equation \eqref{eqn:Gzvw_Solution} and \eqref{eqn:FzvwGzvw_Connection} connecting the functions $F(z,v,w)$ and $G(z,v,w)$, the corresponding result for runs in mappings $\bm{Y}_{n} = \left(\begin{smallmatrix}Y_{n}^{[a]} \\ Y_{n}^{[d]}\end{smallmatrix}\right)$ can be obtained in a rather straightforward way. Namely, the dominant singularity of $G(z,v,w)$, for $v$ and $w$ in a neighbourhood of $v=w=1$, is also given at $z=\rho(v,w)$, which follows from \eqref{eqn:FzvwGzvw_Connection}, or alternatively from the fact that the denominator of \eqref{eqn:Gzvw_Solution} vanishes for $F=\tau(v,w)$. Using the expansion \eqref{eqn:Fzvw_LocalExpansion} for $F$ around $z=\rho$ we obtain the following local expansion of $G(z,v,w)$ around $z=\rho(v,w)$:
\begin{equation}\label{eqn:Gzvw_LocalExpansion}
  G(z,v,w) = \frac{1}{\sqrt{\chi} \sqrt{1-\frac{z}{\rho}}} + \mathcal{O}(1),
\end{equation}
with $\chi=\chi(v,w)$ given as follows (again using the abbreviations $\bar{v}=1-v$ and $\bar{w}=1-w$):
\begin{equation*}
  \chi = \frac{2 (e^{3\tau}-\bar{v}\bar{w}e^{2\tau}-\bar{v}\bar{w}e^{\tau}+\bar{v}^{2}\bar{w}^{2})(e^{4\tau}-4\bar{v}\bar{w}e^{2\tau}+2\bar{v}\bar{w}(\bar{v}+\bar{w})e^{\tau}-\bar{v}^{2}\bar{w}^{2})}{e^{3\tau}(e^{\tau}-\bar{v})(e^{\tau}-\bar{w})(e^{\tau}-\bar{v}\bar{w})^{2}}.
\end{equation*}
Singularity analysis applied to \eqref{eqn:Gzvw_LocalExpansion} gives the following asymptotic expansion of the probability generating function of $\bm{Y}_{n}$,
\begin{equation*}
  \mathbb{E}\left( v^{Y_{n}^{[a]}} w^{Y_{n}^{[d]}}\right) = \frac{n!}{n^{n}} [z^{n}] G(z,v,w) = {\textstyle{\sqrt{\frac{2}{\chi}}}} \cdot \frac{1}{(e \rho)^{n}} \cdot \left( 1+\mathcal{O}(n^{-1})\right),
\end{equation*}
and thus, by setting $v=e^{s_{1}}$ and $w=e^{s_{2}}$, of the moment generating function:
\begin{equation}\label{eqn:YnaYnd_MGF}
  \mathbb{E}(e^{Y_{n}^{[a]} s_{1} + Y_{n}^{[d]} s_{2}}) = e^{U(s_{1},s_{2}) \cdot n + \tilde{V}(s_{1},s_{2})} \cdot \left( 1+\mathcal{O}(n^{-1})\right),
\end{equation}
with $U(s_{1},s_{2})$ appearing in \eqref{eqn:XnaXnd_MGF} and $\tilde{V}(s_{1},s_{2}) = -\frac{1}{2}\ln\left(\frac{2}{\chi(e^{s_{1}},e^{s_{2}})}\right)$.
Thus, from the bivariate quasi-power theorem \cite{Heu2007} we deduce that $\bm{X}_{n}$ and $\bm{Y}_{n}$ have the same limiting behaviour, which is stated in Theorem~\ref{thm:XnYn}.

\subsection{Relation to a joint study of ascents and leaves by Gessel}

Gessel \cite{Ges1996} gave a joint study of descents and leaves in forests of rooted labelled trees. Of course, due to symmetry, all enumeration results also hold for a joint study of ascents and leaves in these structures, where an ascent is defined as a node which has at least one in-neighbour with a smaller label. It was already pointed out in \cite{Ges1996} that the results given there could be transferred easily to rooted labelled trees (instead of forests). Conversely, all results obtained in the present work easily give corresponding results for forests of trees. In particular, the limiting distribution results also hold for forests. Since we focus on trees here, we reformulate a main result of \cite{Ges1996}, which concerns the characterization of the generating function jointly counting ascents and leaves: let $A_{n,m,\ell}$ be the number of size-$n$ trees with $m$ ascents and $\ell$ leaves and $A(z,v,w) \colonequals \sum_{n \ge 1} \sum_{m \ge 0} \sum_{\ell \ge 1} A_{n,m,\ell} \, \frac{z^{n} v^{m} w^{\ell-1}}{n!}$ its generating function; then $A \colonequals A(z,v,w)$ is characterized as solution of the functional equation
\begin{equation}\label{eqn:Azvw_Solution}
  z = \frac{\ln\left(\frac{(ve^{A} -v+1)(we^{A}-w+1)}{e^{A}}\right)}{vwe^{A}-(1-v)(1-w)}.
\end{equation}
Since $A(z,v,w)$ is symmetric in $v$ and $w$, this has the interesting consequence that the number of labelled trees of a certain size with $a$ ascents and $b+1$ leaves equals the number of trees of the corresponding size with $b$ ascents and $a+1$ leaves. 
Actually, a proof of this fact motivated the study of $A(z,v,w)$ in \cite{Ges1996} and combinatorial explanations of this symmetry relation can be found in \cite{Kal2002}.

When considering $F$ and $A$ characterized by the functional equations \eqref{eqn:Fzvw_Solution} and \eqref{eqn:Azvw_Solution}, respectively, one obtains that they are related via $F(z,v,w) = A(zvw,\frac{1}{v},\frac{1}{w})$. At the level of coefficients this yields $A_{n,m,\ell} = F_{n,n-m,n+1-\ell}$, i.e., that the number of trees of size $n$ with $m$ ascents and $\ell$ leaves is equal to the number of trees of the same size with $n-m$ ascending runs and $n+1-\ell$ descending runs. Due to the characterization of the starting node of an ascending run as a node without having an in-neighbour with a smaller label, i.e., as a node that is not an ascent, it even follows for any fixed tree $t$ that it has $m$ ascents iff it has $|t|-m$ ascending runs. However, for the relation on the joint behaviour of ascending and descending runs, and ascents and leaves, respectively, we do not have a combinatorial explanation.
Nevertheless, as a consequence of this connection one can easily deduce from Theorem~\ref{thm:XnYn} bivariate distribution results for the number of ascents and leaves in random labelled trees, which we want to state in the following.
\begin{theorem}
Let $\bm{A}_{n} \colonequals \left(\begin{smallmatrix}A_{n}^{[A]}\\ A_{n}^{[L]}\end{smallmatrix}\right)$ be the random vector jointly counting the number of ascents $A_{n}^{[A]}$ and the number of leaves $A_{n}^{[L]}$ in a random size-$n$ tree. Then it holds that 
$\bm{A}_{n} \stackrel{(d)}{=} \left(\begin{smallmatrix}n\\ n+1 \end{smallmatrix}\right) - \bm{X}_{n}$, with $\bm{X}_{n}$ the random vector counting ascending and descending runs in labelled trees as introduced in Theorem~\ref{thm:XnYn}. Moreover, after suitable normalization, $\bm{A}_{n}$ converges in distribution to a bivariate normal distribution:
\begin{equation*}
  \frac{1}{\sqrt{n}} \left(\bm{A}_{n} - \left(\begin{smallmatrix}e^{-1}\\ e^{-1}\end{smallmatrix}\right) \cdot n\right) \xrightarrow{(d)} \mathcal{N}(\bm{0}, \bm{\Sigma}),
\end{equation*}
where the variance-covariance matrix $\bm{\Sigma}$ is stated in Theorem~\ref{thm:XnYn}.
\end{theorem}

\end{document}